\newcommand{\RHT}{R^T_H}
\newcommand{\RHTts}{R^T_H(t,s)}
\newcommand{\RHts}{R_H(t,s)}
\newcommand{\RH}{R_H}
\newcommand{\Vtth}{Var(X^H_{t+h}-X^H_t)}
\newcommand{\vth}{\sigma_{t,h}^2}
\newcommand{\btso}{\beta_{t,s}^1}
\newcommand{\btst}{\beta_{t,s}^2}
\newcommand{\btho}{\beta_{t,t+h}^1}
\newcommand{\btht}{\beta_{t,t+h}^2}
\newcommand{\bsto}{\beta_{s,t}^1}
\newcommand{\bstt}{\beta_{s,t}^2}
\newcommand{\bhto}{\beta_{t+h,t}^1}
\newcommand{\bhtt}{\beta_{t+h,t}^2}
\newtheorem{theorem}{Theorem}[subsection]
\newtheorem{proposition}[theorem]{Proposition}
\newtheorem{lemma}[theorem]{Lemma}
\newtheorem{corollary}[theorem]{Corollary}
\newtheorem{definition}[theorem]{Definition}
\theoremstyle{remark}
\newtheorem{remark}[theorem]{Remark}
\title{Fractional Brownian Motion with Variable Hurst Parameter: Definition and Properties}
\author{Jelena Ryvkina\\
Department of Mathematics, Tufts University,\\ 
\href{mailto:jelena.ryvkina@tufts.edu}{jelena.ryvkina@tufts.edu}}
\begin{document}

\maketitle

\begin{abstract} A class of Gaussian processes generalizing the usual \emph{fractional Brownian motion} for Hurst indices in (1/2,1) and \emph{multifractal Brownian motion} introduced in \cite{multifrac} and \cite{bouf} is presented. Any measurable function assuming values in this interval can now be chosen as a variable Hurst parameter. These processes allow for modeling of phenomena where the regularity properties can change with time either continuously or through jumps, such as in the volatility of a stock or in Internet traffic. Some properties of the sample paths of the new process class, including different types of continuity and long-range dependence, are discussed. It is found that the regularity properties of the Hurst function chosen directly correspond to the regularity properties of the sample paths of the processes. The long-range dependence property of fractional Brownian motion is preserved in the larger process class. As an application, Fokker-Planck-type equations for a time-changed \emph{fractional Brownian motion with variable Hurst parameter} are found. 
\end{abstract}

\textbf{Keywords} Fractional Brownian motion $\cdot$ Gaussian processes $\cdot$ Variable \indent parameter $\cdot$ Self-similarity $\cdot$ Sample path regularity\\

\textbf{Mathematics Subject Classification (2010)} 60G15 $\cdot$ 60G17 $\cdot$ 60G22

\section{Introduction}

\emph{Fractional Brownian motion (fBM)} is a class of zero-mean Gaussian processes with covariance functions given by

$$Cov(t,s)=\frac{1}{2}\left(s^{2H}+t^{2H}-|t-s|^{2H}\right)\mbox{ for }s,t\geq0,$$
where the so-called Hurst parameter $H$ is a constant in (0,1). This process was first introduced by Kolmogorov in 1940 \cite{Kolm}. The name \emph{fBM} goes back to Mandelbrot and van Ness, who found a representation as an integral with respect to regular \emph{Brownian motion (BM)} in 1968 \cite{MandNess}. \emph{fBM} has stationary increments, is a.s. H\"older continuous of any order $\gamma<H$ on compact intervals, and is $H$-self-similar.\\

\emph{fBM} is used for models in numerous areas such as finance, hydrology, and telecommunications. The Hurst parameter is what determines the path behavior of \emph{fBM} and hence the value of $H$ directly reflects the types of phenomena which can be modeled by the corresponding \emph{fBM}. In particular, the case $H<1/2$ yields negatively correlated increments and rather erratic paths suitable for modeling systems with fast changing states. In the case $H>1/2$, the increments are positively correlated and the process exhibits long-range dependence, i.e., \emph{fBM} with such $H$ is used when slowly decaying effects are observed. The case $H=1/2$ yields the usual \emph{BM} and hence independent increments are obtained \cite{ochs}.\\

In nature, one often observes changes in the dynamics of a system over time. For example, during market shocks, one can deduce unusual movements in the volatility of a stock price. Even though, due to long-range dependence in the data, \emph{fBM} with $H>1/2$ seems to be a good choice for modeling the volatility - as was suggested in \cite{Comte}, it was demonstrated in \cite{Chron} that different $H$ values for different time intervals are necessary.  Another example is the use of \emph{fBM} in image processing. In \cite{Jenn}, projections of images of osteoporosis affected bone matter are modeled as \emph{fBM} with a spatial index. It is likely that the model could be improved, where the Hurst parameter allowed to vary over the different areas of the sample, since the disease does not affect all bone matter uniformly. Further examples, where a constant $H$ seems unrealistic, can be easily found in computer traffic \cite{Vehel} and other fields. Consequently, a Gaussian  process exhibiting long-range dependence and variable in its path behavior is of interest not only theoretically but for various applications as well.\\

The objective of this paper is to define a class of Gaussian processes, extending \emph{fBM} for $H>1/2$ by permitting arbitrary measurable functions $H(\cdot)$ as variable Hurst parameters. Some initial properties such as path-regularity properties and long-range dependence shall be derived. Other generalizations of \emph{fBM} have been considered before, but the class of permissible Hurst index functions was usually restricted due to the purpose or the inherent properties of the respective construction. \emph{Multifractal Brownian motion (mfBM)} for example was introduced in \cite{multifrac} and \cite{bouf} and involves H\"older continuous parameter functions. For the process class considered in \cite{PeltVeh} and \cite{BenJaff}, the aim was to obtain sufficiently regular processes and hence only H\"older continuous parameter functions were used in the definition of \emph{multifractional Brownian motion (mBM)} as well. A further extension of \emph{mBM} was then presented in \cite{GMBM} and includes some lower semi-continuous functions, but a construction involving limits of H\"older functions and limits of corresponding processes was needed in order to maintain the regularity properties of the resulting processes. Other generalizations (see, e.g., \cite{pfBM}) are limited to piecewise constant $H$.\\

Here, a different approach inspired by Decreusefond's idea in \cite{Decreus} involving an appropriate class of covariance functions is pursued. His focus on Volterra processes yields a generalization of \emph{fBM} under some strong regularity assumption on the function $H(\cdot)$. However, in Section 2 of this paper, it will be shown that it is possible to extend this approach. The use of covariance functions to define Gaussian processes presents a coherent way to extend \emph{fBM} and \emph{mfBM} to a class of processes parametrized by the set of all measurable functions with values in (1/2,1) and different from \emph{mBM}. Possible discontinuities in the paths of the resulting processes are of interest for applications involving non-continuous but long-range dependent processes, especially since the location of their occurrence can be controlled through the parameter function, as will be seen in Section 3.2 . \\  

This paper is structured as follows: In Section 2, the class of covariance functions $\RH$ is defined and several representations of $\RH(\cdot,\cdot)$ are given. \emph{Fractional Brownian motion with variable Hurst parameter (fBMvH)} is defined. In Section 3, asymptotic behavior of the covariance functions is used to investigate some path properties of \emph{fBMvH}. Interestingly, the regularity properties of the Hurst function turn out to be directly reflected in the regularity properties of the paths of \emph{fBMvH} (e.g., jumps yield jumps, H\"older continuity yields H\"older continuity). Further, it is shown that the long-range dependence property, which makes \emph{fBM} with $H>1/2$ a realistic model for persistent systems, is preserved for the extended class. As an application, in Section 4, a Fokker-Plank-type equation for the time-changed \emph{fBMvH} is presented. In Section 5, the process is modeled and plots of several different \emph{fBMvHs} are shown.

\section{\textit{fBMvH}}

The first objective in obtaining an extension of the \emph{fBM} class will be to find a class of covariance functions involving a time varying $H$. Once a suitable class of functions is found, it is straightforward to define a new class of Gaussian processes.\\

In the case $H>1/2$, the covariance function of \emph{fBM} can be obtained as
    \begin{eqnarray}
    Cov(t,s)&=&\int_0^{s\wedge t}K(t,u)K(s,u)\,du,\mbox{ where}\nonumber\bigskip\\
    K(t,s)&=&\left[c_{H}s^{\frac{1}{2}-H}\int_s^t(u-s)^{H-\frac{3}{2}}u^{H-\frac{1}{2}}\,du\right]\mathbbm{1}_{[0,t]}(s),\label{eq0}
    \end{eqnarray}
for $c_{H}=\left(\frac{H(2H-1)}{\beta(2-2H,H-1/2)}\right)^{1/2}$, where $\beta(\cdot,\cdot)$ denotes the Beta function (see e.g., \cite{Nua2}). This paper makes use of L. Decreusefond's idea in \cite{Decreus}, where under certain regularity assumptions, Volterra processes were obtained by replacing the constant $H$ in (\ref{eq0}) with a function $H(t)$. It will be shown however that the measurability of $H(\cdot)$ alone is sufficient in order to define a Gaussian process.\\

Let $T>0$ and $H:[0,T]\rightarrow(1/2,1)$ be a function and $R_H^T:[0,T]\times[0,T]\rightarrow\mathbb{R}$ be defined by
\begin{eqnarray}\label{cov}
\RHTts & = & \int_0^{s\wedge t}K_H(t,u)K_H(s,u)\,du,
\end{eqnarray}
with 
\begin{eqnarray}\label{eq1}
K_H(t,s)& = & \left[c_{H_t}s^{\frac{1}{2}-H_t}\int_s^t(u-s)^{H_t-\frac{3}{2}}u^{H_t-\frac{1}{2}}\,du\right]\mathbbm{1}_{[0,t]}(s)
\end{eqnarray}
for $s,t\in[0,T]$ and where $H_t=H(t).$ Clearly, $\RHT$ is symmetric. Furthermore, $\RHT$ can be used as covariance function for a Gaussian process on $[0,T]$ due to the next theorem.
\begin{theorem}\label{psd} Let $H$ be measurable. Then, $\RHT$ is positive semidefinite.
\end{theorem}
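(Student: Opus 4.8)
The plan is to exploit the fact that $\RHT$ is presented as an inner product of kernels and reduce the claim to the standard Gram-type argument for positive semidefiniteness. The first step I would take is to rewrite the defining integral (\ref{cov}) as an integral over the whole interval $[0,T]$. Because of the factor $\mathbbm{1}_{[0,t]}$ in (\ref{eq1}), the kernel satisfies $K_H(t,u)=0$ whenever $u>t$; hence the integrand $K_H(t,u)K_H(s,u)$ vanishes for $u>s\wedge t$ and nothing is lost by enlarging the domain of integration. This yields the representation
\begin{eqnarray*}
\RHTts & = & \int_0^T K_H(t,u)\,K_H(s,u)\,du,
\end{eqnarray*}
in which both factors now range over the same fixed space $L^2([0,T])$.

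Granting this representation, positive semidefiniteness follows from the usual observation that a kernel of Gram type is nonnegative definite. Concretely, I would fix an arbitrary integer $n$, points $t_1,\dots,t_n\in[0,T]$, and reals $a_1,\dots,a_n$, and compute
\begin{eqnarray*}
\sum_{i=1}^n\sum_{j=1}^n a_i a_j\,R_H^T(t_i,t_j) & = & \int_0^T\Bigl(\sum_{i=1}^n a_i K_H(t_i,u)\Bigr)^{2}\,du\ \geq\ 0,
\end{eqnarray*}
where the interchange of the finite sum with the integral is immediate once each $K_H(t_i,\cdot)$ is known to be square-integrable, the cross terms being integrable by the Cauchy--Schwarz inequality. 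Since this holds for every choice of $n$, of the points, and of the coefficients, $\RHT$ is positive semidefinite.

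The one point that genuinely requires justification -- and the step I expect to be the main obstacle -- is the square-integrability of $u\mapsto K_H(t,u)$ on $[0,T]$ for each fixed $t$, since the singular factors $u^{1/2-H_t}$ near $u=0$ and $(u-s)^{H_t-3/2}$ near the upper endpoint of the inner integral must be controlled. Here the key simplification is that for a \emph{fixed} $t$ the value $H_t=H(t)$ is merely a constant in $(1/2,1)$, so $K_H(t,\cdot)$ coincides with the classical Volterra kernel of ordinary \emph{fBM} with Hurst index $H_t$; its $L^2$-norm is therefore finite, and in fact $\int_0^T K_H(t,u)^2\,du=R_H^T(t,t)=t^{2H_t}<\infty$ by the known identity (\ref{eq0}) (see \cite{Nua2}). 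Measurability of $H$ plays no role in the quadratic-form estimate itself, which involves only finitely many time points at which $H$ takes arbitrary admissible values; it is needed only to guarantee that $\RHT$ is a well-defined (jointly measurable) function that may legitimately serve as the covariance of a Gaussian process.
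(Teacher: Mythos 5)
Your proof is correct, and its core --- writing the quadratic form as $\int_0^T\bigl(\sum_i a_i K_H(t_i,u)\bigr)^2\,du\geq0$ --- is the same Gram-type identity the paper uses; the paper merely phrases it in operator language, applying the adjoint $K^*$ of the integral operator to $\sum_i\alpha_i\delta_{t_i}$, which produces exactly $\sum_i\alpha_iK_H(t_i,\cdot)$. Where you genuinely diverge is in the integrability step that legitimizes this identity. The paper proves the stronger, joint statement $K_H\in L^2([0,T]\times[0,T])$ by an explicit calculation (via Lemma \ref{AppB1}) reducing the double integral to $\int_0^T t^{2H_t}\,dt$; this occupies essentially the whole proof and is where measurability of $H$ enters. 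You instead observe that for each \emph{fixed} $t$ the section $K_H(t,\cdot)$ coincides with the classical Volterra kernel of \emph{fBM} with the constant Hurst index $H_t$, so $\int_0^T K_H(t,u)^2\,du=t^{2H_t}<\infty$ by the known identity (\ref{eq0}); with finitely many points, this pointwise bound plus Cauchy--Schwarz is all the Gram argument needs. Your route is shorter and more elementary, and it correctly isolates the fact that measurability of $H$ plays no role in the finite-dimensional inequality itself. What the paper's heavier computation buys is the joint $L^2$ membership, which is reused later in the paper: it is cited to justify the existence of the adjoint operator on $L^2([0,T])$ and the It\^o-integral representation (\ref{BMrep}) of $X^H$. (Incidentally, your pointwise observation yields $K_H(t,\cdot)\in L^2([0,T])$ for \emph{every} $t$, which is the form actually needed for (\ref{BMrep}), whereas joint square-integrability alone would only give it for almost every $t$.)
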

\begin{proof} Let $K$ be the integral operator defined on $L^2([0,T])$ via $K_H$, i.e., $Kf(t)=\int_0^{T}K_H(t,s)f(s)\,ds.$ If $K_H$ is in $L^2([0,T]\times[0,T])$, then  $K$ has an adjoint on  $L^2([0,T])$ (see \cite{Con}) given by $K^*g(s)=\int_s^TK_H(t,s)g(t)\,dt$ for $g\in L^2.$ For $\alpha_1,...,\alpha_n\in\mathbb{R}$ and $t_1,...,t_n\in[0,T]$,
\begin{eqnarray}\label{eq2}
\sum\limits_{i,j}\alpha_i\alpha_jR_H^T(t_i,t_j)=\int_0^T[K^*(\sum\limits_i\alpha_i\delta_{t_i})(s)]^2\,ds\geq0,
\end{eqnarray}
where $\delta_t$ denotes the Dirac delta. It remains to show that $\int_{[0,T]^2}K_H(t,s)^2\,ds\,dt$ is finite.
\begin{IEEEeqnarray*}{lLl}
\int_{[0,T]^2}K_H(t,s)^2\,ds\,dt&&\bigskip\\ 
=\int_0^T\int_0^T\int_{u\vee v}^Tc_{H_t}^2(uv)^{H_t-\frac{1}{2}}\int_0^{u\wedge v}s^{1-2H_t}(u-s)^{H_t-\frac{3}{2}}(v-s)^{H_t-\frac{3}{2}}\,ds\,dt\,du\,dv.&&\bigskip\\
=\int_0^T\int_0^T\int_{u\vee v}^Tc_{H_t}^2(u-v)^{2H_t-2}\beta(H_t-1/2,2-2H_t)\,dt\,du\,dv\mbox{ by Lemma }\ref{AppB1}&&\bigskip\\ 
=\int_0^Tc_{H_t}^2\beta(H_t-1/2,2-2H_t)\int_0^t\int_0^t(u-v)^{2H_t-2}\,du\,dv\,dt&&\bigskip\\ 
=\int_0^Tt^{2H_t}\,dt,&&\\
\end{IEEEeqnarray*}
which is finite for all measurable $H$ with values in (1/2,1). 
\end{proof}
The following theorem shows that (\ref{eq2}) is in fact strict and will become useful in Section 5.
\begin{theorem}\label{pd} Let $H$ be measurable. Then, $\RHT$ is positive definite.
\end{theorem}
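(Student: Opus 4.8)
The plan is to show that the inequality in (\ref{eq2}) is strict whenever the coefficients $\alpha_1,\dots,\alpha_n$ are not all zero and the nodes $t_1,\dots,t_n$ are distinct points of $(0,T]$. Writing $f(s) := K^*\big(\sum_i \alpha_i \delta_{t_i}\big)(s) = \sum_{i=1}^n \alpha_i K_H(t_i,s)$, the proof of Theorem \ref{psd} already identifies the quadratic form with $\int_0^T f(s)^2\,ds$. Since this integral vanishes precisely when $f = 0$ a.e., the theorem reduces to the statement that the functions $s \mapsto K_H(t_i,s)$, $i = 1,\dots,n$, are linearly independent in $L^2([0,T])$ whenever the $t_i$ are distinct.

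To prove this linear independence I would first record two structural properties of the kernel, both read off directly from (\ref{eq1}). First, the indicator $\mathbbm{1}_{[0,t]}(s)$ forces $K_H(t,s) = 0$ for every $s > t$. Second, for $0 < s < t$ the kernel is strictly positive: $c_{H_t} > 0$ because $H_t \in (1/2,1)$ makes the argument of the square root positive, $s^{\frac{1}{2} - H_t} > 0$, and the inner integral $\int_s^t (u-s)^{H_t - \frac{3}{2}} u^{H_t - \frac{1}{2}}\,du$ has a strictly positive integrand on $(s,t)$ and converges since $H_t - \frac{3}{2} > -1$. Thus $K_H(t,s) > 0$ for all $0 < s < t \le T$, while $K_H(t,s) = 0$ for $s > t$.

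With these two facts the linear independence follows by a staircase induction on the ordered nodes. Relabel so that $0 < t_1 < t_2 < \dots < t_n \le T$ and suppose $f = 0$ a.e. Restricting to $s \in (t_{n-1},t_n)$, every term with $i \le n-1$ vanishes (there $s > t_i$), so $f(s) = \alpha_n K_H(t_n,s)$; since $K_H(t_n,\cdot)$ is strictly positive on this interval and $f = 0$ a.e., we obtain $\alpha_n = 0$. Removing this term and repeating on $(t_{n-2},t_{n-1})$ yields $\alpha_{n-1} = 0$, and so on down to $\alpha_1$. Hence all coefficients vanish, contradicting $\alpha \ne 0$, and the quadratic form is strictly positive.

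The one genuine subtlety, rather than an obstacle, concerns the node $t = 0$: since $K_H(0,\cdot) \equiv 0$ (equivalently $R_H^T(0,\cdot) \equiv 0$, reflecting $X_0 = 0$ a.s.), the form cannot be strict if $0$ is among the nodes, so the statement must be read for nodes in $(0,T]$. The analytic heart of the argument is simply the strict positivity of $K_H(t,s)$ for $0 < s < t$; once that and the support property $s > t \Rightarrow K_H(t,s) = 0$ are in hand, the induction is purely combinatorial and needs no estimates.
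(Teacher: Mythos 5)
Your proof is correct and follows essentially the same route as the paper: order the nodes, use the Volterra support property $K_H(t,s)=0$ for $s>t$ so that only the top term $\alpha_n K_H(t_n,\cdot)$ survives on $(t_{n-1},t_n)$, where the kernel is strictly positive, and conclude strict positivity of the quadratic form --- the paper merely reduces WLOG to all-nonzero coefficients and performs this top step once, instead of your full downward induction establishing linear independence. Your remark about the node $t=0$ (where $R_H^T(0,\cdot)\equiv 0$ forces degeneracy, so the nodes must be taken in $(0,T]$) is a valid refinement that the paper's statement and proof leave implicit.
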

\begin{proof} Without loss of generality, let $n\geq2$ be fixed and $t_1,...,t_n\in[0,T]$ be distinct. Further, let  $\alpha_1,...,\alpha_n\in\mathbb{R}-\{0\}$. By (\ref{eq2}) it is sufficient to show that $K^*(\sum\limits_i\alpha_i\delta_{t_i})(s)=\sum\limits_i\alpha_iK_H(t_i,s)\neq0$ on some interval contained in $[0,T].$ Assume the $t_i$ are ordered by magnitude, i.e., $t_1<t_2<...<t_n.$ For $s\in(t_{n-1},t_n)$ 
\begin{eqnarray*}\sum\limits_i\alpha_iK_H(t_i,s)=\alpha_nK_H(t_n,s)\neq 0
\end{eqnarray*}
by (\ref{eq1}) and hence
\begin{eqnarray*}
\sum\limits_{i,j}\alpha_i\alpha_jR_H^T(t_i,t_j)\geq\alpha_n^2\int_{t_{n-1}}^{t_n}K_H(t_n,s)^2\,ds>0.
\end{eqnarray*}
\end{proof}

$\RHT$ is easily extended to the positive real line. Let $H:[0,\infty)\rightarrow(1/2,1)$ be measurable and let the covariance function $\RH$ be defined by $R_H:[0,\infty)\times[0,\infty)\rightarrow\mathbb{R}_+$ with $R_H(t,s)=R_H^T(t,s)$ for $t,s\leq T$. This definition is consistent, since $R_H^{T_1}(t,s)=R_H^{T_2}(t,s)$ on $[0,T_1\wedge T_2]\times[0,T_1\wedge T_2]$. $R_H$ is positive semidefinite (and positive), because $R_H^T$ has this property for all $T$.

\begin{definition}\label{dXH} \emph{Fractional Brownian motion with variable Hurst parameter (fBMvH)} is defined as the centered Gaussian process $X^H$ on $[0,\infty)$ starting at zero and with covariance function given by $R_H$. 
\end{definition}

For constant $H$, $X^H$ coincides with the usual \emph{fBM} on $\mathbb{R}_+$ and hence the class of processes $\{X^H:\,H:[0,\infty)\rightarrow(1/2,1)\mbox{ is measurable}\}$ is a generalization of \emph{fBM} with $H>1/2$. In the proof of Theorem \ref{psd} it was shown that $K_H(\cdot,\cdot)\in L^2([0,T]\times[0,T])$ and hence $K_H(t,\cdot)\in L^2([0,T])$ for each $t\in(0,T]$ fixed. By the It\^o isometry, $X^H$ can be constructed on $[0,T]$ as 

\begin{IEEEeqnarray}{lLl}\label{BMrep}
X^H_t=\int\limits_{0}^{t}K_H(t,s)\,dB_s\mbox{ for } t\in[0,T],
\end{IEEEeqnarray}
where $(B_s)_{s\in[0,T]}$ is a regular \emph{BM}. For H\"older continuous parameter functions $H$, \emph{fBMvH} coincides with \emph{mfBM} studied in \cite{multifrac} and \cite{bouf}.\\

\begin{remark} Under further regularity assumptions on the function $H,$ i.e., if $H$ assumes values in a compact interval and if the H\"older exponent $\alpha$ of $H$ satisfies $\alpha>\sup_tH_t,$ \emph{mfBM} is shown to be locally asymptotically self-similar in \cite{bouf}. Further, if $H$ is additionally assumed to be of bounded variations, the authors of \cite{bouf} show that the stochastic calculus developed in \cite{Nua2} holds for \emph{mfBM} and establish results on the local time of the process.  
\end{remark}

Several forms of $\RH$ are useful and will be given in the next proposition. One of them involves the Gauss hypergeometric function defined by
\begin{IEEEeqnarray}{lCr}\label{2F1}
\,_2F_1(a,b;c;z)&=&\sum\limits_{n=0}^{\infty}\frac{(a)_n(b)_n}{(c)_n}\frac{z^n}{n!}\mbox{ for }|z|<1,
\end{IEEEeqnarray}
on the unit disc and analytically extendible. $(x)_n$ denotes here the Pochhammer symbol, i.e., $(x)_0=1$ and $(x)_n=x(x+1)\cdots(x+n-1)$ for $n>0$. For properties of $\,_2F_1$ the interested reader is referred to Chapter 15 of \cite{Abram}.\\

In the following, let
\begin{IEEEeqnarray}{lCr}\label{cofs1}
c_{t,s}=c_{H_t}c_{H_s}&\mbox{ and }&\tilde{\beta}_{t,s}=\btso\btst+\bsto\bstt,
\end{IEEEeqnarray} 
both of which are symmetric in $s$ and $t$, with
\begin{IEEEeqnarray}{lCr}\label{cofs2}
\btso=\beta(H_t-1/2,2-H_t-H_s),&\ &\btst=\beta(H_t-H_s+1,H_t+H_s-1).\IEEEeqnarraynumspace
\end{IEEEeqnarray}

\begin{proposition}[Various forms of $\RH$] The following are equivalent forms of $\RH.$  For $s,t\geq0$\medskip\\
i)
\begin{IEEEeqnarray}{lLl}\label{form1}
&&\RHts=c_{t,s}\Bigg[\tilde{\beta}_{t,s}\frac{(s\wedge t)^{H_t+H_s}}{H_t+H_s}+\beta_{s\wedge t,s\vee t}^1\int_{s\wedge t}^{s\vee t}\int_0^{{s\wedge t}}\left(\frac{y}{z}\right)^{\alpha}(y-z)^{H_t+H_s-2}\,dz\,dy\Bigg],
\end{IEEEeqnarray}
where $\alpha=(H_t-H_s)\mathbbm{1}_{\{s\leq t\}}+(H_s-H_t)\mathbbm{1}_{\{s> t\}}$.\\

\noindent Assuming $t\geq s\geq 0$, (\ref{form1}) reduces to
\begin{IEEEeqnarray}{lLl} \label{form1.1}
&&R_H(t,s)=c_{t,s}\left[\tilde{\beta}_{t,s}\frac{s^{H_t+H_s}}{H_t+H_s}+\bsto\int_s^t\int_0^s\left(\frac{y}{z}\right)^{H_t-H_s}(y-z)^{H_t+H_s-2}\,dz\,dy\right].
\end{IEEEeqnarray}
\noindent ii)
\begin{IEEEeqnarray}{lLl}\label{form2} %
&&\RHts=\frac{c_{t,s}}{H_t+H_s}\Bigg[\btso\btst s^{H_t+H_s}+\bsto\Bigl[s^{H_t+H_s}\int_{\frac{s}{t}}^1\nu^{-2H_t}(1-\nu)^{H_s+H_t-2}\,d\nu\nonumber\bigskip\\
&&\ \ \ \ \ \ \ \ \ \ \ \ \ \ \ \ +\,t^{H_t+H_s}\int_0^{\frac{s}{t}}\nu^{H_s-H_t}(1-\nu)^{H_s+H_t-2}\,d\nu\Bigl]\Bigg], \IEEEeqnarraynumspace
\end{IEEEeqnarray}
where by convention, $0\cdot\infty=0$ and hence $R_H^T(t,0) =0;$\\
iii)
\begin{IEEEeqnarray}{lLl}\label{form3}
&&\RHts\medskip\\
&&\ \ \ =\frac{c_{t,s}}{H_t+H_s}\Bigg[\btso\btst s^{H_t+H_s}+\bsto\bstt t^{H_t+H_s}+\frac{\beta_{s,t}^1t(t-s)^{H_t+H_s-1}}{H_t+H_s-1}\left(\frac{t}{s}\right)^{H_t-H_s}\nonumber\bigskip\\
&&\ \ \ \ \ \ \ \ \ \times[\,_2F_1(1,2H_t;H_s+H_t;\frac{s-t}{s})-\,_2F_1(1,H_t-H_s;H_s+H_t;\frac{s-t}{s})]\Bigg].\nonumber
\end{IEEEeqnarray}
iv) 
\begin{IEEEeqnarray}{lLl}\label{form4} 
&&\RHts=c_{t,s}\int_{0}^t\int_0^s\Big[\bsto\mathbbm{1}_{\{y>z\}}+\btso\mathbbm{1}_{\{y<z\}}\Big]\left(\frac{y}{z}\right)^{H_t-H_s}|y-z|^{H_s+H_t-2}\,dz\,dy.
\end{IEEEeqnarray}
v) For $t\geq s\geq0$
\begin{IEEEeqnarray}{lLl}\label{form5} 
&&\RHts=\frac{c_{t,s}}{H_t+H_s}\Bigg[\frac{\btso\btst}{H_t+H_s} s^{H_t+H_s}+\frac{\bsto\bstt}{H_t+H_s} t^{H_t+H_s}\bigskip\\
&&\ \ \ \ \ \ \ \ \ \ \ \ \ \ \ \ \ -\bsto\int_{s}^t\int_z^t\left(\frac{y}{z}\right)^{H_t-H_s}(y-z)^{H_s+H_t-2}\,dy\,dz\Bigg].\nonumber
\end{IEEEeqnarray}

\end{proposition}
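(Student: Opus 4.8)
The plan is to derive (\ref{form4}) directly from the definition and then obtain every other form from it by Fubini's theorem together with standard Beta- and hypergeometric-function manipulations. Since $\RHts$ is symmetric, I may assume $t\geq s\geq0$ throughout: in that case (\ref{form1}) reduces to (\ref{form1.1}), so it suffices to prove the ordered versions and read off (\ref{form1}) by symmetrizing at the end.

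First I would insert (\ref{eq1}) into (\ref{cov}). With $t\geq s$ one has $s\wedge t=s$, and writing out $K_H(t,u)$ and $K_H(s,u)$ expresses $\RHts$ as $c_{t,s}$ times the triple integral of $v^{H_t-\frac12}w^{H_s-\frac12}\,u^{1-H_t-H_s}(v-u)^{H_t-\frac32}(w-u)^{H_s-\frac32}$ over $\{0\le u\le s,\ u\le v\le t,\ u\le w\le s\}$. The integrand is nonnegative there, so Tonelli's theorem lets me interchange the order of integration and carry out the $u$-integration first, with $v\in[0,t]$ and $w\in[0,s]$ as outer variables and $u$ running over $[0,v\wedge w]$.

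The crux --- and the step I expect to be the main obstacle --- is the inner integral $\int_0^{v\wedge w}u^{1-H_t-H_s}(v-u)^{H_t-\frac32}(w-u)^{H_s-\frac32}\,du$, which is the unequal-exponent analogue of Lemma \ref{AppB1}. For $v>w$ I would substitute $u=wx$ and pull out a factor $v^{H_t-\frac32}$ to bring it to Euler's form $\int_0^1x^{b-1}(1-x)^{c-b-1}(1-\tfrac{w}{v}x)^{-a}\,dx$ with $b=2-H_t-H_s$, $c=\tfrac32-H_t$, $a=\tfrac32-H_t$. Since $a=c$, the Euler representation of $\,_2F_1$ (cf. \cite{Abram}) degenerates via $\,_2F_1(a,b;a;z)=(1-z)^{-b}$ to a single power, and the integral evaluates to $\bsto\,v^{\frac12-H_s}w^{\frac12-H_t}(v-w)^{H_t+H_s-2}$; the case $v<w$ is symmetric and gives the same expression with $\bsto$ replaced by $\btso$. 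Substituting back, the surviving powers combine as $v^{H_t-\frac12}v^{\frac12-H_s}=v^{H_t-H_s}$ and $w^{H_s-\frac12}w^{\frac12-H_t}=w^{H_s-H_t}$, i.e. $(y/z)^{H_t-H_s}$ after relabeling $v,w$ as $y,z$, yielding exactly (\ref{form4}). The only delicate point is the Beta-factor bookkeeping --- tracking which of $\bsto,\btso$ appears and verifying the $v\leftrightarrow w$ symmetry --- everything downstream being routine.

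From (\ref{form4}) the remaining forms follow by splitting the domain of integration and evaluating one-dimensional Beta integrals. For (\ref{form1.1}) I would split $[0,t]=[0,s]\cup[s,t]$ in the $y$-variable: over $[s,t]\times[0,s]$ only $\{y>z\}$ occurs, producing the $\bsto$ double integral, while over the square $[0,s]^2$ the two indicator pieces integrate (after the substitutions $z=y\xi$ and $y=z\eta$) to $\bsto\bstt$ and $\btso\btst$ times $s^{H_t+H_s}/(H_t+H_s)$, which combine into $\tilde{\beta}_{t,s}$. For (\ref{form5}) I would instead take $z$ outer, split the $y$-integral at $y=z$, identify the $\{y<z\}$ part with the $\btso\btst$-term, and rewrite the $\{y>z\}$ part over $z\in[0,s]$ as the integral over the full triangle $\{0\le z\le y\le t\}$ (giving the $\bsto\bstt\,t^{H_t+H_s}$-term) minus the integral over $\{s\le z\le y\le t\}$. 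For (\ref{form2}) I would start from (\ref{form1.1}), substitute $\nu=z/y$ in the rectangle term, interchange the $y$- and $\nu$-integrations (the $y$-dependent upper limit $s/y$ splitting the $\nu$-range at $s/t$), and evaluate the inner $y$-integrals; the $\bsto\bstt$ contributions then cancel against the corresponding part of $\tilde{\beta}_{t,s}$, leaving precisely the two $\nu$-integrals of (\ref{form2}). Finally, (\ref{form3}) follows from (\ref{form2}) by recognizing each $\nu$-integral as an incomplete Beta function and re-expressing it through the integral representation underlying (\ref{2F1}), the argument $(s-t)/s<0$ reflecting the analytic continuation of $\,_2F_1$ off the unit disc.
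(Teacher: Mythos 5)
Your plan is correct in substance, and it organizes the proof genuinely differently from the paper. The paper's own proof is essentially a set of pointers: forms (\ref{form1}), (\ref{form1.1}) and (\ref{form2}) are obtained from the definition (\ref{cov}) via Lemma \ref{AppB1} --- whose part (ii) is precisely your ``main obstacle'' inner integral, evaluated there by the elementary substitutions $x=\frac{u-b}{u-a}$, $y=\frac{b}{ax}$ rather than by Euler's representation and the degenerate identity $\,_2F_1(a,b;a;z)=(1-z)^{-b}$, and whose part (iii) is precisely your ``substitute $\nu=z/y$ and interchange'' step for (\ref{form2}); form (\ref{form3}) is obtained from (\ref{form2}) via Lemma \ref{Abramo}(v) and (i); and forms (\ref{form4}) and (\ref{form5}) are not proved at all but cited from \cite{bouf} and \cite{multifrac}. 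You instead make (\ref{form4}) the hub: you derive it directly from (\ref{cov}) by Tonelli plus the degenerate-hypergeometric evaluation (your evaluation of the inner integral, including the constants $\bsto$, $\btso$, checks out exactly against Lemma \ref{AppB1}(ii)), and you then obtain all other forms by splitting the domain of integration and computing Beta integrals. This buys self-containedness --- your route proves (\ref{form4}) and (\ref{form5}) rather than importing them from the mfBM literature --- and concentrates the coefficient bookkeeping in one place. The one step where you are thinner than the paper is (\ref{form3}): converting the $\nu$-integrals of (\ref{form2}) into hypergeometric functions gives $\,_2F_1$'s of argument $s/t$, and passing to the argument $\frac{s-t}{s}$ is not mere analytic continuation off the disc of convergence of (\ref{2F1}); it requires the connection formula 15.3.9 of \cite{Abram}, i.e.\ Lemma \ref{Abramo}(i), exactly as the paper indicates.

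One concrete point you should be aware of: carried out faithfully, your derivation of (\ref{form5}) does not reproduce the printed formula but rather
\[
\RHts=c_{t,s}\Bigg[\frac{\btso\btst}{H_t+H_s}\,s^{H_t+H_s}+\frac{\bsto\bstt}{H_t+H_s}\,t^{H_t+H_s}-\bsto\int_s^t\int_z^t\left(\frac{y}{z}\right)^{H_t-H_s}(y-z)^{H_t+H_s-2}\,dy\,dz\Bigg],
\]
i.e.\ without the overall prefactor $\frac{1}{H_t+H_s}$. Your version is the correct one: for $H_t=H_s=H$ one has $c_{t,s}\btso\btst=H$ and $c_{t,s}\bsto=H(2H-1)$, so the display above reduces to $\frac{1}{2}\left(s^{2H}+t^{2H}-(t-s)^{2H}\right)$, consistent with Corollary \ref{cfBM} and with forms (\ref{form1.1})--(\ref{form4}), whereas the printed (\ref{form5}) gives this quantity divided by $2H$. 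So (\ref{form5}) as stated carries a spurious factor $\frac{1}{H_t+H_s}$; your hub-and-spokes argument actually detects this typo, and you should flag the discrepancy rather than claim your computation lands exactly on the printed formula.
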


\begin{proof} Form (\ref{form1.1}), (\ref{form1}) and (\ref{form2}) are derived using Lemma \ref{AppB1}. The integrals in representation (\ref{form2}) can be expressed in terms of the Gauss hypergeometric function (see \cite{Prud}, p. 301 or Lemma \ref{Abramo} $(v)$, in Appendix B). Using 15.3.9 in \cite{Abram} (Lemma \ref{Abramo} $(i)$, in Appendix B), form (\ref{form3}) is obtained. (\ref{form4}) and (\ref{form5}) were derived for \emph{mfBM} in \cite{bouf} and \cite{multifrac} respectively. 
\end{proof}

\begin{corollary}\label{cfBM} For $s,t\in[0,T]$ such that $H_t=H_s$
$$\RHTts=\frac{1}{2}(s^{2H_t}+t^{2H_t}-|t-s|^{2H_t}).$$
Further, the variance function of fBMvH $X^H$ is given by $R_H^T(t,t)=t^{2H_t}.$
\end{corollary}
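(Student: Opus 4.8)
The plan is to specialize one of the closed forms from the preceding proposition to the case $H_t=H_s$ and watch the Beta factors collapse. Since $\RH$ is symmetric I may assume $t\geq s\geq 0$ and work with form (\ref{form1.1}) (recalling $\RHts=\RHTts$ for $s,t\leq T$). Writing $H:=H_t=H_s$, the exponent $H_t-H_s$ appearing in the double integral becomes $0$, so $(y/z)^{H_t-H_s}=1$ and the integrand reduces to the single-variable kernel $(y-z)^{2H-2}$. Simultaneously the Beta symbols simplify: using $\beta(1,x)=1/x$ one gets $\btst=\bstt=\beta(1,2H-1)=1/(2H-1)$, while $\btso=\bsto=\beta(H-1/2,2-2H)$, so that $\tilde{\beta}_{t,s}=2\beta(H-1/2,2-2H)/(2H-1)$ and $c_{t,s}=c_H^2$.

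Next I would evaluate the remaining double integral directly. Integrating $(y-z)^{2H-2}$ first in $z$ over $[0,s]$ and then in $y$ over $[s,t]$ is an elementary antiderivative computation yielding
\[
\int_s^t\int_0^s(y-z)^{2H-2}\,dz\,dy=\frac{t^{2H}-(t-s)^{2H}-s^{2H}}{2H(2H-1)}.
\]
Substituting this together with the simplified Beta factors into (\ref{form1.1}) and factoring out $\beta(H-1/2,2-2H)/\big(2H(2H-1)\big)$, the two $s^{2H}$ contributions combine ($2s^{2H}-s^{2H}=s^{2H}$) and the bracket collapses to $s^{2H}+t^{2H}-(t-s)^{2H}$.

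The decisive step is the cancellation of the prefactor. Since $c_H^2=H(2H-1)/\beta(2-2H,H-1/2)$ and the Beta function is symmetric, so that $\beta(2-2H,H-1/2)=\beta(H-1/2,2-2H)$, the coefficient $c_H^2\cdot\beta(H-1/2,2-2H)/\big(2H(2H-1)\big)$ reduces exactly to $1/2$. This gives $\RHts=\tfrac12\big(s^{2H}+t^{2H}-(t-s)^{2H}\big)$, and because $t\geq s$ one has $(t-s)^{2H}=|t-s|^{2H}$, proving the first assertion. I expect this bookkeeping of the Beta identities, and the precise emergence of the constant $1/2$, to be the only real point of care; everything else is routine integration.

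For the variance, the cleanest route is to set $s=t$ in the formula just obtained: then $H_t=H_s$ holds automatically, $|t-s|=0$, and the expression becomes $\tfrac12(t^{2H_t}+t^{2H_t})=t^{2H_t}$. Alternatively, this value is already implicit in the proof of Theorem \ref{psd}: the inner integrations there showed that $\int_0^tK_H(t,s)^2\,ds=t^{2H_t}$, and by definition (\ref{cov}) the quantity $\RHTts$ at $s=t$ is precisely this integral.
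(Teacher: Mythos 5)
Your proposal is correct and follows exactly the route the paper intends: the corollary is stated without proof as an immediate consequence of the proposition on the various forms of $\RH$, obtained by setting $H_t=H_s$ in form (\ref{form1.1}), evaluating the now-elementary double integral, and using $\beta(1,x)=1/x$ together with the symmetry of the Beta function to produce the constant $\tfrac12$. Your integral evaluation, the collapse of the Beta factors, and both derivations of the variance (setting $s=t$, or reading it off from the computation in the proof of Theorem \ref{psd}) all check out.
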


\begin{remark} For H\"older continuous parameter functions $H$, \emph{mBM} was introduced in \cite{BenJaff} and \cite{PeltVeh} and has a covariance function of type $g(H_t,H_s)(t^{H_t+H_s}+s^{H_t+H_s}+|t-s|^{H_t+H_s})$ (see \cite{Cohen}), where $g(x,y)$ is smooth on $(\frac{1}{2},1)\times(\frac{1}{2},1)$ and symmetric in $x$ and $y$, and  $g(H_t,H_s)$ does not directly depend on $s$ and $t$. Hence, for general H\"older continuous $H$, the covariance functions of \emph{fBMvH} and \emph{mBM} cannot be transformed into each other upon multiplication by a function solely dependent on $H_s$ and $H_t$ (see, e.g., form (\ref{form3})). Accordingly, the processes cannot be generally obtained from each other through just $H(\cdot)$-dependent normalization, i.e., for a general H\"older continuous $H$, \emph{mBM} and \emph{fBMvH} will exhibit different dependence structures.
\end{remark}

\section{Properties}

\subsection{(Non)-self-similarity}

In order to achieve variability in the path behavior over time for the \emph{fBMvH} class, clearly, the stationarity of the increments as well as self-similarity properties had to be relaxed. Instead, the following proposition holds.

\begin{proposition}\label{gss} For any $a>0$ fixed, fBMvH $X^H$ satisfies
$$X^H_{at}\sim a^{\overline{H}_t}X^{\overline{H}}_t$$
in the sense of finite dimensional distributions and with $\overline{H}(t)=H(at).$
\end{proposition}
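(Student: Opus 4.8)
The plan is to exploit the fact that both processes involved are centered and Gaussian, so that equality of finite-dimensional distributions is equivalent to equality of the covariance functions (for jointly Gaussian families the finite-dimensional laws are determined by the mean and covariance, and both means vanish). Writing $Y_t=X^H_{at}$ and $Z_t=a^{\overline{H}_t}X^{\overline{H}}_t$, both are centered Gaussian with covariances $\mathrm{Cov}(Y_t,Y_s)=R_H(at,as)$ and $\mathrm{Cov}(Z_t,Z_s)=a^{\overline{H}_t+\overline{H}_s}R_{\overline{H}}(t,s)$. Hence the proposition reduces to the single covariance identity
$$R_H(at,as)=a^{\overline{H}_t+\overline{H}_s}R_{\overline{H}}(t,s),\qquad \overline{H}_t=H_{at},\ \overline{H}_s=H_{as}.$$

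To establish this identity I would start from the explicit integral representation (\ref{form4}), which is best suited here because its integrand depends on the time arguments only through the Hurst values $H_t,H_s$ and through the integration limits, with no leftover explicit dependence on $t$ and $s$. Substituting $t\mapsto at$ and $s\mapsto as$ in (\ref{form4}) and then performing the change of variables $y\mapsto ay$, $z\mapsto az$, the scale-invariant factors $(y/z)^{H_{at}-H_{as}}$ and the indicators $\mathbbm{1}_{\{y>z\}},\mathbbm{1}_{\{y<z\}}$ are unchanged, while the kernel $|y-z|^{H_{as}+H_{at}-2}$ contributes a factor $a^{H_{as}+H_{at}-2}$ and the Jacobian contributes $a^2$. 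These combine into the prefactor $a^{H_{at}+H_{as}}$, and the integration limits return to $[0,t]$ and $[0,s]$.

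The final step is to recognize the remaining integral as $R_{\overline{H}}(t,s)$. This is where the bookkeeping must be done carefully: I would check that $c_{at,as}=c_{\overline{H}_t}c_{\overline{H}_s}$ and that the beta coefficients $\beta_{as,at}^1,\beta_{at,as}^1$ appearing after the substitution coincide with the coefficients of (\ref{form4}) for the Hurst function $\overline{H}$. Both facts follow because $c_{t,s}$ and the $\beta^1$ terms (see (\ref{cofs1})--(\ref{cofs2})) depend on $t,s$ only through $H_t$ and $H_s$, so the relabeling $\overline{H}_t=H_{at}$ transports them correctly. With this matching the integral is exactly $R_{\overline{H}}(t,s)$, yielding the claimed identity and hence the proposition.

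The main obstacle I anticipate is not the change of variables itself but precisely this coefficient bookkeeping: one must verify that every Hurst-dependent constant in (\ref{form4}) is carried correctly under $\overline{H}_t=H_{at}$. One should also note separately the degenerate cases $s=0$ or $t=0$, where the convention $R_H(\cdot,0)=0$ makes the identity hold trivially.
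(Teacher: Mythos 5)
Your proposal is correct and takes essentially the same approach as the paper: by Gaussianity the claim reduces to the covariance identity $R_H(at,as)=a^{\overline{H}_t+\overline{H}_s}R_{\overline{H}}(t,s)$, verified by a scaling change of variables in an integral representation of $R_H$. The only difference is cosmetic---the paper performs the substitution directly in the defining representation (\ref{cov}), where the kernel scales as $K_H(at,au)=a^{H_{at}-1/2}K_{\overline{H}}(t,u)$, whereas you use form (\ref{form4}); both computations produce the same factor $a^{\overline{H}_t+\overline{H}_s}$.
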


\begin{proof} Using (\ref{cov}), one can easily check that $Cov(X^H_{at},X^H_{as})=a^{\overline{H}_t+\overline{H}_s}Cov(X^{\overline{H}}_t,X^{\overline{H}}_s).$ 
\end{proof}

\begin{remark} If $H$ is constant, Proposition \ref{gss} yields the usual self-similarity property of \emph{fBM}.
\end{remark}

\subsection{Continuity of the paths}

The path regularity of \emph{fBMvH} is closely related to the regularity properties of the parameter function $H.$ In the following paragraphs, it is shown that a continuous $H$ is a prerequisite for any kind of continuity of the sample paths and that discontinuities of $H$ yield discontinuities in the paths of $X^H$. If $H$ is H\"older continuous, the process possesses an almost surely continuous modification and is H\"older continuous as well. Furthermore, $H$ then impacts the path regularity directly at each point $t$, i.e., the local H\"older exponent of \emph{fBMvH} at time $t$ is given by $H_t.$\\
 
\noindent\underline{Continuity in probability and discontinuities}\\

\noindent It will be useful to establish that a continuous $H$ implies a continuous covariance function $\RH$. This fact can then be used to link continuity of the parameter function to stochastic continuity of the process $X^H$.

\begin{lemma}\label{contH} The function $H:[0,\infty)\rightarrow(1/2,1)$ is continuous on $\mathbb{R}_{+}$ if and only if the covariance function $\RH(\cdot,\cdot)$ is continuous in each variable and the variance $Var(X^H_{\cdot})$ is continuous as well.
\end{lemma}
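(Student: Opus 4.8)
The statement is an equivalence, so the plan is to prove the two implications separately, with the forward direction (continuity of $H$ forcing continuity of $\RH$ and of the variance) carrying most of the analytic work, and the reverse direction reducing, away from two exceptional points, to inverting the variance formula of Corollary~\ref{cfBM}.

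For the forward direction, suppose $H$ is continuous. The variance $R_H(t,t)=t^{2H_t}$ is continuous because it is the composition $t\mapsto(t,H_t)\mapsto t^{2H_t}$ of continuous maps on $(0,\infty)\times(1/2,1)$, and $t^{2H_t}\to 0=R_H(0,0)$ as $t\to 0$ since the exponent stays bounded below by $1$. For continuity of $\RH$ in each variable I would work from form~(\ref{form1.1}) (and its symmetric companion~(\ref{form1}) to cover $t<s$ and the crossing $t=s$). The prefactors $c_{t,s}$, $\tilde{\beta}_{t,s}$, $\btso$, $\btst$ are continuous functions of $(H_t,H_s)$: every Beta argument occurring in $c_{H_t}$, in $\btso=\beta(H_t-1/2,2-H_t-H_s)$ and in $\btst=\beta(H_t-H_s+1,H_t+H_s-1)$ lies in the open positive quadrant for $H_t,H_s\in(1/2,1)$ (e.g. $H_t-1/2>0$, $2-H_t-H_s>0$, $H_t+H_s-1>0$), and the Beta function is continuous there; composing with the continuous map $t\mapsto H_t$ gives continuity in $(t,s)$. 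It then remains to show the double integral in~(\ref{form1.1}) depends continuously on $t$ for fixed $s$ (and symmetrically). I would fix a limit point $t_0$, use continuity of $H$ to trap $H_t$ in a compact subinterval $[a,b]\subset(1/2,1)$ on a neighbourhood of $t_0$, and apply dominated convergence: the integrand $(y/z)^{H_t-H_s}(y-z)^{H_t+H_s-2}\mathbbm{1}_{\{y<t\}}$ converges pointwise as $t\to t_0$ and is dominated by an integrable function, the worst exponent $(y-z)^{a+H_s-2}$ being integrable near the corner $y=z=s$ since $a+H_s>1$.

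For the reverse direction, suppose $\RH$ is continuous in each variable and the variance is continuous. At every $t_0\in(0,\infty)$ with $t_0\neq1$ one recovers $H$ directly from the variance: since $R_H(t,t)=t^{2H_t}>0$ for $t>0$, one has $H_t=\dfrac{\ln R_H(t,t)}{2\ln t}$, a quotient of continuous functions whose denominator is nonzero near $t_0$, so $H$ is continuous at $t_0$. The variance gives no information at $t_0=1$ (where $t^{2H_t}\equiv 1$), so continuity there must be extracted from the covariance. I would argue by contradiction: if $H$ is discontinuous at $1$, pick $t_n\to 1$ with, after passing to a subsequence, $H_{t_n}\to h^{\ast}\neq H_1$ (possible by boundedness of $H$); fix some $s$ with $0<s<1$, at which $H$ is already known to be continuous, and pass to the limit in~(\ref{form1.1}) using exactly the dominated-convergence argument of the forward direction. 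This yields $R_H(t_n,s)\to\Phi(h^{\ast})$, where $\Phi(h)$ denotes the right-hand side of~(\ref{form1.1}) evaluated at $t=1$ with the first Hurst slot set to $h$; comparing with the assumed convergence $R_H(t_n,s)\to R_H(1,s)=\Phi(H_1)$ gives $\Phi(h^{\ast})=\Phi(H_1)$.

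The main obstacle is therefore this last step: showing that $h\mapsto\Phi(h)$ is injective for a suitably chosen fixed $s<1$, so that $\Phi(h^{\ast})=\Phi(H_1)$ forces $h^{\ast}=H_1$ and the desired contradiction. This is a concrete property of the explicit covariance formula, and I expect it to require either a monotonicity estimate or an asymptotic expansion of $\Phi$ in $h$; it is the genuinely delicate point, whereas everything else is continuity of Beta-function prefactors together with dominated convergence. I would also remark separately that the origin $t_0=0$ is degenerate: since $R_H(t,s)\to 0$ and $R_H(t,t)\to 0$ as $t\to 0$ regardless of the behaviour of $H$ near $0$, continuity of $H$ at the origin is invisible to $\RH$, so that point is either excluded from the equivalence or handled by the convention $R_H(\cdot,0)=0$.
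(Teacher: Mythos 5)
Your forward direction and the off-diagonal part of your reverse direction match the paper's proof in substance: continuity of $\RH$ in each variable is obtained from dominated convergence plus continuity of the Beta-function coefficients (the paper runs this through forms (\ref{form2}) and (\ref{form1.1}) in four cases, including the diagonal case you handle via form (\ref{form1})), and for $t_0\in\mathbb{R}_+\setminus\{0,1\}$ the paper recovers $H$ from the variance exactly as you do, by inverting $t^{2H_t}$. Your side remark that the point $t_0=0$ is invisible to both the variance and the covariance is fair; the paper glosses over it.

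The genuine gap is at $t_0=1$, and you have named it yourself without closing it. Your contradiction argument reduces continuity of $H$ at $1$ to the injectivity of $h\mapsto\Phi(h)$ for a \emph{single} fixed $s<1$, and you then only say you "expect" this to follow from monotonicity or an expansion; that expectation is not a proof, and injectivity for one fixed $s$ is neither obvious nor clearly the right target. This is precisely the one non-routine step of the lemma --- the paper itself disposes of it tersely by appealing to "the form of the coefficient functions in (\ref{form2})". The step can be completed, but most naturally by using \emph{all} $s$ rather than one: once the subsequence $t_n\to1$ with $H_{t_n}\to h^{\ast}$ is fixed, your dominated-convergence argument gives $\Phi_s(h^{\ast})=\Phi_s(H_1)$ simultaneously for every $s\in(0,1)$, where $\Phi_s(h)$ denotes the right-hand side of (\ref{form1.1}) with $t=1$ and first Hurst slot $h$. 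One can then compare asymptotics as $s\to0$: in (\ref{form1.1}) the double integral dominates and $\Phi_s(h)$ decays like a constant times $s^{1+H_s-h}$, the ratio of the coefficients for $h=h^{\ast}$ and $h=H_1$ staying bounded above and below (the singular factors in $s$, such as $\Gamma(H_s-1/2)$, cancel in the quotient), so that $\Phi_s(h^{\ast})/\Phi_s(H_1)\asymp s^{H_1-h^{\ast}}$, which tends to $0$ or $\infty$ unless $h^{\ast}=H_1$. Without some argument of this kind, the reverse implication remains unproven at $t_0=1$, so your proposal is incomplete at exactly the point where the lemma requires real work.
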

\begin{proof} First, assume that $H_t$ is continuous for $t\in\mathbb{R}_{+}$. Then, $Var(X^H_{t})=t^{2H_t}$ is clearly continuous in $t$ as well. To establish continuity of $\RH(\cdot,\cdot)$ consider the following cases.

\noindent\underline{Case 1:} Let $t$ be fixed and $(s_n)$ be a sequence converging to $s>0$ with $t>s$. For all $n$ sufficiently large $s_n<t$. The convergence of the integral terms in form (\ref{form2}) of $\RH(t,s_n)$ to the integral terms of $\RH(t,s)$ follows by Lebesgues's dominated convergence theorem. The non-integral term is clearly continuous.\medskip\\ 
\underline{Case 2:} For a sequence $(s_n)$ with $s_n\downarrow0$ and $t>0$ fixed, $\RH(t,s_n)\rightarrow0$, since by Lebesgues's dominated convergence theorem, the integral part of form (\ref{form1.1}) converges to 0. If $t=0$, convergence of $\RH(s_n,0)$ to 0 becomes obvious.\medskip\\
\underline{Case 3:} If $s$ is fixed and $(t_n)$ is a sequence converging to $t>s>0$, the continuity of $\RH(\cdot,s)$ at $t$ can be established using the same arguments as in \\Case 1.\medskip\\
\underline{Case 4:} To see that $\RH(t_n,t)\rightarrow\RH(t,t)$ for $t_n\rightarrow t$, one can first consider increasing and decreasing sequences and proceed as in Case 1 and then  use $\RH(t_n,t)=\RH(t_n,t)\mathbbm{1}_{\{t_n\leq t\}}+\RH(t_n,t)\mathbbm{1}_{\{t_n>t\}}$ for an arbitrary sequence converging to $t$.\\

\noindent Now suppose that $\RH(\cdot,\cdot)$ is continuous in each variable and that $Var(X_{\cdot}^H)$ is continuous as well. $H$ has to be continuous at any $t_0\in\mathbb{R}_+-\{1\}$, since $t^{2H_t}\rightarrow t_0^{2H_{t_0}}$ for $t\rightarrow t_0$. That $H$ is also continuous at $t_0=1$ can be deduced from the form of the coefficient functions in (\ref{form2}). 
\end{proof}
 
\begin{proposition}\label{cp} The process $X^H$ is continuous in probability if and only if $H$ is continuous on $\mathbb{R}_+.$
\end{proposition}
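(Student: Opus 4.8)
The plan is to establish the equivalence by exploiting the standard fact for Gaussian processes that continuity in probability is governed entirely by the second-moment structure. Specifically, a centered Gaussian process $X^H$ is continuous in probability at $t$ precisely when $Var(X^H_{t_n}-X^H_t)\to 0$ for every sequence $t_n\to t$. Since the increment variance expands as
\begin{IEEEeqnarray*}{lLl}
Var(X^H_{t_n}-X^H_t)=Var(X^H_{t_n})-2R_H(t_n,t)+Var(X^H_t),
\end{IEEEeqnarray*}
the whole question reduces to the joint behavior of the variance function and the covariance function along the diagonal, which is exactly the content already packaged in Lemma \ref{contH}.

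For the ``if'' direction, I would assume $H$ continuous and invoke Lemma \ref{contH} to conclude that $Var(X^H_\cdot)$ is continuous and that $R_H(\cdot,\cdot)$ is continuous in each variable. Fixing $t$ and taking $t_n\to t$, the variance term $Var(X^H_{t_n})=t_n^{2H_{t_n}}\to t^{2H_t}=Var(X^H_t)$, and the cross term $R_H(t_n,t)\to R_H(t,t)=Var(X^H_t)$ by continuity of $R_H$ in its first variable (this is precisely Case 4 of the previous proof). Hence $Var(X^H_{t_n}-X^H_t)\to 0$, giving $X^H_{t_n}\to X^H_t$ in $L^2$ and therefore in probability. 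For the converse, continuity in probability of a centered Gaussian process forces $L^2$-convergence (convergence in probability of Gaussians upgrades to convergence of all moments), so $Var(X^H_{t_n}-X^H_t)\to 0$ for all $t_n\to t$. Taking $t_n=t$ on one slot shows the variance $t\mapsto t^{2H_t}$ is continuous, and then the same increment-variance identity forces $R_H(t_n,t)\to R_H(t,t)$, so $R_H$ is continuous in each variable along the diagonal and off it. By the reverse implication of Lemma \ref{contH}, $H$ must then be continuous.

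The main obstacle is making the reduction to $L^2$-convergence airtight and correctly marshaling the diagonal continuity. The delicate point is that continuity in probability is a genuinely weaker-looking hypothesis than $L^2$-continuity, so I must justify the upgrade: for jointly Gaussian increments, convergence in probability to zero implies the variances tend to zero, because a nondegenerate Gaussian family cannot concentrate at $0$ unless its variance vanishes. Once that is in hand, the argument is essentially a transcription of the four-case continuity analysis of Lemma \ref{contH} into statements about increment variances, and the only subtlety is the behavior at $t_0=1$, where $t^{2H_t}$ fails to detect changes in $H$ (since $1^{2H_1}=1$ regardless of $H_1$); there one must lean on the covariance form, exactly as the lemma does, rather than on the variance alone. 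I would therefore phrase the proof as a direct consequence of Lemma \ref{contH} together with the increment-variance identity, keeping the Gaussian $L^2$/probability equivalence as the one genuinely new ingredient.
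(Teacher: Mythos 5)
Your proof is correct, and it ends in the same place as the paper's---both arguments reduce the proposition to Lemma \ref{contH}---but you treat the probabilistic half differently. The paper disposes of that half with a citation: Theorem 8.12 of \cite{Janson} states that a Gaussian process is stochastically continuous if and only if its covariance is continuous in each variable and its variance is continuous, after which Lemma \ref{contH} finishes the proof. You instead prove that equivalence from scratch: since the increments $X^H_{t_n}-X^H_t$ are centered Gaussian, convergence in probability to $0$ forces $Var(X^H_{t_n}-X^H_t)\rightarrow0$ (a centered Gaussian cannot concentrate at $0$ unless its variance vanishes---exactly the $2(1-\Phi(\cdot))$ computation the paper itself uses in the proof of Lemma \ref{discH}, part (ii)), and the identity $Var(X^H_{t_n}-X^H_t)=Var(X^H_{t_n})-2R_H(t_n,t)+Var(X^H_t)$ then shuttles between $L^2$-continuity and continuity of the second-moment functions. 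What this buys is self-containedness: no external theorem is needed, and the role of Gaussianity (the probability-to-$L^2$ upgrade) is made explicit. The cost is length, plus one small imprecision you should repair: the increment-variance identity only yields continuity of $R_H(\cdot,t)$ at the diagonal point $t$, whereas the reverse direction of Lemma \ref{contH} also needs off-diagonal continuity (its $t_0=1$ case reads $H_1$ off the coefficient functions of $R_H(t,s)$ with $s\neq t$, precisely because the variance $1^{2H_1}=1$ is blind to $H_1$, as you note). For the off-diagonal statement use Cauchy--Schwarz rather than the identity: for fixed $s$, $|R_H(t_n,s)-R_H(t,s)|=|E[(X^H_{t_n}-X^H_t)X^H_s]|\leq\sqrt{Var(X^H_{t_n}-X^H_t)}\,s^{H_s}\rightarrow0$. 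Since you already have $L^2$-continuity in hand at that point, this is a one-line fix, not a gap in the idea.
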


\begin{proof} By Theorem 8.12 in \cite{Janson}, $X^H$ is stochastically continuous if and only if $\RH(\cdot,\cdot)$ is continuous in each variable and $Var(X_{\cdot}^H)$ is continuous as well. The claim follows by Lemma \ref{contH}. 
\end{proof}

The following Lemma will be useful for showing that a discontinuity in $H$ at $t_0$ implies either a jump of $X^H$ at $t_0$ or a sequence of discontinuity points converging to $t_0.$

\begin{lemma}\label{discH} If $H$ is discontinuous at $t_0>0$, then\medskip\\
i) $\RH(\cdot,t_0)$ is discontinuous at $t_0$; \medskip\\
ii) $X^H$ is stochastically discontinuous at $t_0.$
\end{lemma}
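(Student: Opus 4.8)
The plan is to reduce both claims to one computation: the off-diagonal limit $L:=\lim_n\RH(t_n,t_0)$ along a sequence $t_n\to t_0$ chosen so that $H_{t_n}\to H_1$ with $H_1\neq H_{t_0}$, compared against the on-diagonal value $\RH(t_0,t_0)=t_0^{2H_{t_0}}$ from Corollary \ref{cfBM}. Such a sequence exists: discontinuity of $H$ at $t_0$ yields $t_n\to t_0$ with $|H_{t_n}-H_{t_0}|$ bounded below, and since $H$ takes values in the bounded interval $(1/2,1)$ I may pass to a subsequence along which $H_{t_n}$ converges to some $H_1\in[1/2,1]$, necessarily $\neq H_{t_0}$; refining once more I assume $(t_n)$ monotone. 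I also record the two variances $\RH(t_n,t_n)=t_n^{2H_{t_n}}$ and $\RH(t_0,t_0)=t_0^{2H_{t_0}}$ from Corollary \ref{cfBM}.

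First I would evaluate $L$ from form (\ref{form1.1}) (using symmetry of $\RH$ in the case $t_n<t_0$). The double-integral term vanishes in the limit because its outer domain $[t_0\wedge t_n,\,t_0\vee t_n]$ collapses while its inner integral stays uniformly bounded (the exponents $H_{t_n}+H_{t_0}-2\in(-1,0)$ and $H_{t_n}-H_{t_0}$ stay away from the divergent range), so that $L=c_{H_1}c_{H_{t_0}}\,\tilde\beta_{H_1,H_{t_0}}\,t_0^{H_1+H_{t_0}}/(H_1+H_{t_0})$ by continuity of the coefficients. For $H_1\in(1/2,1)$ dominated convergence in the kernel representation (\ref{cov}) further identifies $L=\int_0^{t_0}K_{H_1}(t_0,u)K_{H_{t_0}}(t_0,u)\,du$, the $L^2(0,t_0)$ inner product of the two constant-index kernels (\ref{eq1}) with indices $H_1,H_{t_0}$ at time $t_0$; these behave like $u^{1/2-H_1}$ and $u^{1/2-H_{t_0}}$ as $u\downarrow0$ and so are non-proportional, whence Cauchy--Schwarz is \emph{strict}: $L<\|K_{H_1}(t_0,\cdot)\|_2\|K_{H_{t_0}}(t_0,\cdot)\|_2=t_0^{H_1+H_{t_0}}$, the squared norms being $t_0^{2H_1},t_0^{2H_{t_0}}$ by Corollary \ref{cfBM}. (The boundary values $H_1\in\{1/2,1\}$ make the kernel concentrate and must be reached by a short limiting argument in the coefficient, rather than by this dominated-convergence identification.)

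For part (ii) this strict bound closes the argument at every $t_0>0$. By Gaussianity, $E[(X^H_{t_n}-X^H_{t_0})^2]=t_n^{2H_{t_n}}-2\RH(t_n,t_0)+t_0^{2H_{t_0}}\to t_0^{2H_1}-2L+t_0^{2H_{t_0}}$, and $L<t_0^{H_1+H_{t_0}}$ together with $2t_0^{H_1+H_{t_0}}\le t_0^{2H_1}+t_0^{2H_{t_0}}$ forces this limit to strictly exceed $(t_0^{H_1}-t_0^{H_{t_0}})^2\ge0$, hence to be positive. A sequence of centered Gaussians whose variances tend to a positive number cannot converge to $0$ in probability, so $X^H$ is not continuous in probability at $t_0$, which is (ii). (For $t_0\neq1$ one may avoid the limit-level inequality altogether: applying strict Cauchy--Schwarz at each fixed pair $(t_n,t_0)$ gives $E[(X^H_{t_n}-X^H_{t_0})^2]>(t_n^{H_{t_n}}-t_0^{H_{t_0}})^2\to(t_0^{H_1}-t_0^{H_{t_0}})^2>0$ directly.)

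For part (i) it remains to separate $L$ from $t_0^{2H_{t_0}}$. Whenever $t_0^{2H_{t_0}}\ge t_0^{2H_1}$ — in particular when $t_0=1$ — the geometric mean obeys $t_0^{H_1+H_{t_0}}\le t_0^{2H_{t_0}}$, so $L<t_0^{H_1+H_{t_0}}\le t_0^{2H_{t_0}}$ and $\RH(\cdot,t_0)$ jumps at $t_0$. The hard part will be the complementary regime $t_0^{2H_1}>t_0^{2H_{t_0}}$, where $t_0^{H_1+H_{t_0}}>t_0^{2H_{t_0}}$ and the strict Cauchy--Schwarz bound no longer decides the sign of $L-t_0^{2H_{t_0}}$. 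Writing $L=g(H_1,H_{t_0})\,t_0^{H_1+H_{t_0}}$ with $g<1$, the obstruction is exactly the possibility $g(H_1,H_{t_0})=t_0^{\,H_{t_0}-H_1}$; I expect to rule it out by showing that $h\mapsto\int_0^{t_0}K_h(t_0,u)K_{H_{t_0}}(t_0,u)\,du$ is strictly monotone in $h$, so that it attains the value $t_0^{2H_{t_0}}$ only at $h=H_{t_0}$ and therefore differs from it at $h=H_1$. Establishing this strict monotonicity of the cross-covariance in the Hurst index is the one genuinely delicate computation; the rest is bookkeeping around $L$ and Corollary \ref{cfBM}.
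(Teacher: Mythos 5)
Your part (ii) is correct and, on the key point, more complete than the paper's own argument. The paper proves (ii) with the same Gaussian tail computation, but it merely asserts that $Var(X^H_{t_n}-X^H_{t_0})$ converges to ``some constant $c>0$''; the positivity, which is the whole content, is never justified there. Your route supplies it: the limit $L=\lim_n\RH(t_n,t_0)$ you extract from (\ref{form1.1}) coincides with the limit the paper computes, and your identification of $L$ (for interior $H_1$) as the inner product $\int_0^{t_0}K_{H_1}(t_0,u)K_{H_{t_0}}(t_0,u)\,du$ of two non-proportional constant-index kernels gives the strict bound $L<t_0^{H_1+H_{t_0}}$ and hence a strictly positive limiting variance; your fixed-pair Cauchy--Schwarz variant settles $t_0\neq1$ (including boundary values of $H_1$) even more directly. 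Note also that your (ii) is independent of (i), whereas the paper's is not.

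The genuine gap is part (i), exactly where you flag it, and the repair you propose cannot work, because the obstruction $g(H_1,H_{t_0})=t_0^{H_{t_0}-H_1}$ that you isolate actually occurs. By the scaling $K_h(t_0,t_0w)=t_0^{h-1/2}K_h(1,w)$ of the constant-index kernels, one has $\phi_{t_0}(h):=\int_0^{t_0}K_h(t_0,u)K_{H_{t_0}}(t_0,u)\,du=g(h,H_{t_0})\,t_0^{h+H_{t_0}}$, where $g(h,H_{t_0})=\int_0^1K_h(1,w)K_{H_{t_0}}(1,w)\,dw$ does not depend on $t_0$, equals $1$ at $h=H_{t_0}$, and lies in $(0,1)$ for $h\neq H_{t_0}$ by your own strict Cauchy--Schwarz. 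Now fix $x<y$ in $(1/2,1)$ and set $t_0^{*}=g(x,y)^{1/(y-x)}\in(0,1)$, so that $g(x,y)=(t_0^{*})^{y-x}$; then $\phi_{t_0^{*}}(x)=(t_0^{*})^{2y}=\phi_{t_0^{*}}(y)$, so $h\mapsto\phi_{t_0^{*}}(h)$ is not injective and no strict-monotonicity lemma can hold at $t_0^{*}$. Worse, for the parameter function $H=x\mathbbm{1}_{[0,t_0^{*})}+y\mathbbm{1}_{[t_0^{*},\infty)}$, which is discontinuous at $t_0^{*}$, both one-sided limits of $\RH(\cdot,t_0^{*})$ at $t_0^{*}$ equal $(t_0^{*})^{2y}=\RH(t_0^{*},t_0^{*})$ --- the left one by exactly your computation of $L$, the right one by Corollary \ref{cfBM} --- so $\RH(\cdot,t_0^{*})$ is continuous at $t_0^{*}$ and claim (i) itself fails for this pair. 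The paper's proof of (i) conceals the same hole: it computes the same limit $L$ and then tacitly treats $L\neq t_0^{2H_{t_0}}$ as evident, which is precisely the step that cannot be proved in general. What your geometric-mean argument honestly establishes is (i) under the additional hypothesis that every subsequential limit $\hat H\neq H_{t_0}$ of $H$ at $t_0$ satisfies $t_0^{2\hat H}\leq t_0^{2H_{t_0}}$ (always true at $t_0=1$), or more generally $g(\hat H,H_{t_0})\neq t_0^{H_{t_0}-\hat H}$; beyond such a restriction, part (i) is not salvageable by any argument.
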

\begin{proof} Part (i): Let $(t_n)$ be a sequence such that $t_n\rightarrow t_0$ and $H_{t_n}\rightarrow\hat{H}_{t_0}\neq H_{t_0}.$ Using the same reasoning as in the proof of Lemma \ref{contH}, one can show that $\RH(t_n,t_0)\rightarrow\frac{c_{t,\hat{t}_0}}{H_{t_0}+\hat{H}_{t_0}}\tilde{\beta}_{t_0,\hat{t}_0}t_0^{H_{t_0}+\hat{H}_{t_0}},$ where the subscript $\hat{t}_0$ indicates that $H_{t_0}$ has to be replaced by $\hat{H}_{t_0}$ in the coefficient function in question.\\

\noindent Part(ii): Let $(t_n)$ be again a sequence such that $t_n\rightarrow t_0$ and $H_{t_n}\rightarrow\hat{H}_{t_0}\neq H_{t_0}$ and $\varepsilon>0.$ Then
\begin{eqnarray*}
P(|X^H_{t_n}-X^H_{t_0}|>\varepsilon)&=&P\left(\frac{|X^H_{t_n}-X^H_{t_0}|}{\sqrt{Var(X^H_{t_n}-X^H_{t_0})}}>\frac{\varepsilon}{\sqrt{Var(X^H_{t_n}-X^H_{t_0})}}\right)\bigskip\\
&=&2\left(1-\Phi(\frac{\varepsilon}{\sqrt{Var(X^H_{t_n}-X^H_{t_0})}})\right),
\end{eqnarray*}
where $\Phi$ denotes the Gaussian cumulative distribution function. Due to part (i) and its proof $Var(X^H_{t_n}-X^H_{t_0})=t_n^{2H_{t_n}}+t_0^{2H_{t_0}}-2\RH(t_n,t_0)$ converges to some constant $c>0$ depending on $t_0$ and the sequence $(t_n)$. Consequently, $P(|X^H_{t_n}-X^H_{t_0}|>\varepsilon)\nrightarrow0.$ 
\end{proof}


 For the next proposition, consider $X^H$ to be separable. It is well known that any process indexed by a subset of $\mathbb{R}$ has a separable version (see for example Theorem 2.4 in \cite{doob}). Also, any continuous version is automatically separable.

\begin{proposition} If $H$ has a discontinuity at $t_0\neq 0$, then $X^H$ has almost surely discontinuous sample paths. Then,$P(X^H\mbox{ is discontinuous at }t_0)>0$ and the paths of $X^H$ have either a discontinuity at $t_0$ or a sequence of discontinuity points converging to $t_0$.
\end{proposition}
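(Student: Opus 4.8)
The plan is to combine the stochastic discontinuity already established in Lemma~\ref{discH}(ii) with a zero--one law for Gaussian sample paths. First I would record the elementary half, namely that $P(X^H\text{ is discontinuous at }t_0)>0$. Let $(t_n)$ be the sequence from Lemma~\ref{discH}(ii), for which $Var(X^H_{t_n}-X^H_{t_0})\to c>0$. Were the separable version almost surely continuous at $t_0$, then $X^H_{t_n}\to X^H_{t_0}$ almost surely, hence in probability, so $X^H_{t_n}-X^H_{t_0}\to 0$ in distribution; but $X^H_{t_n}-X^H_{t_0}$ is centered Gaussian with variance tending to $c>0$, so it also converges in distribution to $N(0,c)\neq\delta_0$, a contradiction by uniqueness of the distributional limit. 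Thus almost sure continuity at $t_0$ is impossible, giving $P(X^H\text{ continuous at }t_0)<1$ and therefore $P(X^H\text{ discontinuous at }t_0)>0$.

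For the almost sure statement I would upgrade this using a zero--one law. Fix $n$ large enough that $I_n=[t_0-1/n,\,t_0+1/n]\subset(0,\infty)$ (this is where $t_0\neq0$ enters), and consider the event that the separable Gaussian process $X^H$ has sample paths continuous on the compact set $I_n$. By a zero--one law for Gaussian processes (see, e.g., \cite{Janson}), this event has probability $0$ or $1$. If it had probability $1$, then $X^H$ restricted to $I_n$ would be almost surely uniformly continuous, hence stochastically continuous at the interior point $t_0$, contradicting the stochastic discontinuity of Lemma~\ref{discH}(ii). Hence the event has probability $0$, i.e. $X^H$ is almost surely discontinuous somewhere on $I_n$.

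Finally I would intersect these almost sure events over all $n$: on an event of probability $1$, for every sufficiently large $n$ the path has a discontinuity point $d_n\in I_n$. In particular the paths are almost surely not continuous, which is the first assertion, and since $d_n\to t_0$ we obtain exactly the stated dichotomy: either some $d_n$ equals $t_0$, yielding a discontinuity at $t_0$, or the $d_n$ are all distinct from $t_0$ and form a sequence of discontinuity points converging to $t_0$. I expect the main obstacle to be the careful invocation of the zero--one law: one must verify that ``$X^H$ is continuous on $I_n$'' is a measurable event for the separable version, that the cited Gaussian zero--one law is applicable in this non-stationary setting, and that almost sure sample continuity on the compact interval $I_n$ indeed forces stochastic continuity at the interior point $t_0$.
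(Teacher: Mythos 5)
Your proof is correct and follows essentially the same route as the paper: stochastic discontinuity at $t_0$ from Lemma~\ref{discH}(ii), the zero--one law for continuity of separable Gaussian processes on closed intervals (the paper cites Theorem 2 of \cite{camba}), and intersection of the resulting probability-one events over the shrinking intervals $[t_0-1/n,t_0+1/n]$. The only cosmetic differences are that you invoke the zero--one law in contrapositive form and re-derive the positive-probability step via a distributional-limit contradiction, whereas the paper simply notes $P(A_n)\geq P(X^H\mbox{ is discontinuous at }t_0)>0$ and then upgrades each $P(A_n)$ to $1$.
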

\begin{proof} By Lemma \ref{discH}, part (ii) it follows that $X^H$ is discontinuous in probability at $t_0.$ Hence, the process is also not almost surely continuous at $t_0$ and\\
$P(X^H\mbox{ is discontinuous at }t_0)>0$. Let\\
$A_n=\{X^H\mbox{ is discontinuous on }[t_0-1/n,t_0+1/n]\}.$ It follows that 
$$P(A_n)\geq P(X^H\mbox{ is discontinuous at }t_0)>0.$$
Theorem 2 in \cite{camba} states that a separable Gaussian process is continuous on a closed interval with probability either 0 or 1, which yields that $P(A_n)=1$ for all $n$. Since $A_n\downarrow A=\bigcap_{n\geq1}A_n$, by continuity $$P(A)=\lim\limits_{n\rightarrow\infty}P(A_n)=1.$$ 
\end{proof}

\noindent\underline{Existence of an a.s. continuous and H\"older continuous modification}\\

\noindent In \cite{multifrac} it was shown that for an $\alpha$-H\"older continuous parameter functions $H$ the modification given in (\ref{BMrep}) has a. s. continuous sample paths which are also a.s. H\"older continuous on any compact interval $[a,b]\in\mathbb{R}_+$ with exponents in $(0,\alpha\wedge\min_{t\in[a,b]}H_t)$. The authors used the particular form of the process as well as the Garsia-Rodemich-Rumsey inequality to establish the result. In \cite{bouf}, representation (\ref{BMrep}) is shown to be a. s. continuous under the assumptions that $H$ has values in a compact interval and that the H\"older parameter of $H$ satisfies $\alpha>\max H_t$. If $H$ assumes values in a compact interval and is $\alpha_T-$H\"older continuous on each interval $[0,T]$, a concise proof for the existence of a modification of $X^H$, which is H\"older continuous on each $[0,T]$ with exponents in $(0,\frac{\alpha_T}{2})$, can be found in \cite{diss}. This proof utilizes a slightly modified version of Kolmogorov's continuity theorem.\\

At this point, it remains unclear whether an a.s. continuous modification still exists if the H\"older continuity condition on $H$ is relaxed to continuity. A necessary condition for the existence of an a.s. continuous modification can be formulated in terms of the existence of a majorizing measure (see for example Theorem 12.9 in \cite{Ledoux}).

\subsection{Asymptotic behavior of $\RH$ and local H\"older continuity}

\noindent In the following lemmas and theorem, the asymptotic behavior of $\RH(t+h,t)$ and $Var(X^H_{t+h}-X^H_t)$ will be analyzed for $h\rightarrow0$ and $\RH(t+h,t)$ will be studied in the case $h\rightarrow\infty$ as well. A direct application is a local H\"older continuity result. 

\begin{lemma}\label{ah0} For any $t>0$ fixed and $h\downarrow0$,

\begin{IEEEeqnarray*}{lLl}
&&R_H^T(t+h,t)=\frac{c_{t+h,t}}{H_{t+h}+H_t}\Big[\beta_{t+h,t}^1\beta_{t+h,t}^2t^{H_{t+h}+H_t}+\beta_{t,t+h}^1\beta_{t,t+h}^2(t+h)^{H_{t+h}+H_t}\bigskip\\
&&\hspace{2.5cm}-\frac{\beta_{t,t+h}^1}{H_{t+h}+H_t-1}\left(\frac{t+h}{t}\right)^{H_{t+h}-H_t+1}h^{H_{t+h}+H_t}+O(h^{H_{t+h}+H_t+1})\Big].
\end{IEEEeqnarray*}

\noindent For $h\uparrow0,$
\begin{IEEEeqnarray*}{lLl}
&&R_H^T(t,t+h)=\frac{c_{t,t+h}}{H_{t+h}+H_t}\Big[\beta_{t,t+h}^1\beta_{t,t+h}^2(t+h)^{H_{t+h}+H_t}+\beta_{t+h,t}^1\beta_{t+h,t}^2t^{H_{t+h}+H_t}\hspace{2cm}\bigskip\\
&&\hspace{2.5cm}-\frac{\beta_{t+h,t}^1}{H_{t+h}+H_t-1}\left(\frac{t}{t+h}\right)^{H_{t}-H_{t+h}+1}|h|^{H_{t+h}+H_t}+O(|h|^{H_{t+h}+H_t+1})\Big].
\end{IEEEeqnarray*}
\end{lemma}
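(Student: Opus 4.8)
The plan is to start from the closed form (\ref{form3}), which already expresses $\RHth$ for $t+h\geq t$ in terms of the two hypergeometric functions $\,_2F_1(1,2H_{t+h};H_t+H_{t+h};\frac{-h}{t+h})$ and $\,_2F_1(1,H_{t+h}-H_t;H_t+H_{t+h};\frac{-h}{t+h})$. Setting $s=t$ and $t\mapsto t+h$ in (\ref{form3}), the prefactor becomes $\frac{c_{t+h,t}}{H_{t+h}+H_t}$, the first two bracket terms reproduce exactly the $\beta_{t+h,t}^1\beta_{t+h,t}^2 t^{H_{t+h}+H_t}$ and $\beta_{t,t+h}^1\beta_{t,t+h}^2(t+h)^{H_{t+h}+H_t}$ contributions claimed in the statement, and the hypergeometric difference, multiplied by $\frac{\beta_{t,t+h}^1(t+h)(t+h-t)^{H_{t+h}+H_t-1}}{H_{t+h}+H_t-1}(\frac{t+h}{t})^{H_{t+h}-H_t}$, is the quantity that must be expanded for small $h$. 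Since $t+h-t=h$, this whole term carries the prefactor $h^{H_{t+h}+H_t-1}$, and so the leading asymptotics will come from the constant term of the hypergeometric difference.

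Next I would expand the two $\,_2F_1$ functions for their argument $z=\frac{-h}{t+h}\to 0$. Each $\,_2F_1(a,b;c;z)=1+\frac{ab}{c}z+O(z^2)$ by the series (\ref{2F1}), so the constant terms cancel and the difference equals $\frac{H_{t+h}}{H_t+H_{t+h}}\big(2H_{t+h}-(H_{t+h}-H_t)\big)z+O(z^2)=\frac{H_{t+h}(H_{t+h}+H_t)}{H_t+H_{t+h}}z+O(z^2)$, i.e. of order $z=O(h)$. Multiplying by the $h^{H_{t+h}+H_t-1}$ prefactor already present gives a contribution of order $h^{H_{t+h}+H_t}$, matching the claimed singular term; one then has to verify that after collecting the algebraic factors $(t+h)$, $(\frac{t+h}{t})^{H_{t+h}-H_t}$, $\frac{1}{H_{t+h}+H_t-1}$ and $z=\frac{-h}{t+h}$, the coefficient simplifies to precisely $-\frac{\beta_{t,t+h}^1}{H_{t+h}+H_t-1}(\frac{t+h}{t})^{H_{t+h}-H_t+1}$, and that the next order is $O(h^{H_{t+h}+H_t+1})$. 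The case $h\uparrow0$ follows by the same computation with the roles of $t$ and $t+h$ interchanged (using the symmetry of $\RH$ and form (\ref{form3}) with $s=t+h<t$), replacing $h$ by $|h|$ in the resulting powers.

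The main obstacle I expect is not the leading-order cancellation but the bookkeeping in the remainder estimate: one must show that the $O(z^2)$ terms in each hypergeometric series, once multiplied by the $h^{H_{t+h}+H_t-1}$ prefactor, genuinely produce an $O(h^{H_{t+h}+H_t+1})$ error uniformly as $h\downarrow0$, and in particular that the hypergeometric series converges at $z=\frac{-h}{t+h}$ for small $h$ (which holds since $|z|<1$) with a remainder controllable by its next Taylor coefficient. A subtlety worth flagging is that $H_{t+h}$ itself varies with $h$, so the exponent $H_{t+h}+H_t$ in the error term is not fixed; one should therefore keep the estimates in the form stated, with the exponent written as $H_{t+h}+H_t+1$ rather than collapsed to a numerical power, and avoid any step that would require $H_{t+h}\to H_t$.
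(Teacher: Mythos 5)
Your strategy is exactly the paper's: specialize form (\ref{form3}) to the pair $(t+h,t)$, expand the two hypergeometric functions via the series (\ref{2F1}), note that the constant terms cancel, and push everything from second order on into the error term. Structurally this is sound, but two concrete algebra slips would make the coefficient verification you defer actually fail. First, the argument of the hypergeometric functions is misread: in (\ref{form3}) the argument is $\frac{s-t}{s}$, where $s$ is the \emph{smaller} time, so for $R_H^T(t+h,t)$ it equals $-\frac{h}{t}$, not $-\frac{h}{t+h}$. Second, your linear coefficient is off: since $\,_2F_1(1,b;c;z)=1+\frac{b}{c}z+O(z^2)$, the difference satisfies
\begin{equation*}
\,_2F_1\bigl(1,2H_{t+h};H_t+H_{t+h};z\bigr)-\,_2F_1\bigl(1,H_{t+h}-H_t;H_t+H_{t+h};z\bigr)=\frac{2H_{t+h}-(H_{t+h}-H_t)}{H_t+H_{t+h}}\,z+O(z^2)=z+O(z^2),
\end{equation*}
with coefficient exactly $1$, whereas your expression carries a spurious factor $H_{t+h}$ (you get $H_{t+h}z+O(z^2)$). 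These errors do not cancel: with your values the coefficient of $h^{H_{t+h}+H_t}$ collects to $-\frac{\beta_{t,t+h}^1 H_{t+h}}{H_{t+h}+H_t-1}\left(\frac{t+h}{t}\right)^{H_{t+h}-H_t}$, which differs from the lemma's $-\frac{\beta_{t,t+h}^1}{H_{t+h}+H_t-1}\left(\frac{t+h}{t}\right)^{H_{t+h}-H_t+1}$ by a factor of order one (not $O(h)$), so the identity you assert "simplifies precisely" is false as you set it up. With the correct $z=-\frac{h}{t}$ and coefficient $1$, the factors combine as $(t+h)\cdot\frac{1}{t}\cdot\left(\frac{t+h}{t}\right)^{H_{t+h}-H_t}=\left(\frac{t+h}{t}\right)^{H_{t+h}-H_t+1}$ and the lemma's term comes out exactly.

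The remainder uniformity you flag but do not carry out is the only other substantive step, and it is where the paper does real work: because the series coefficients depend on $h$ through $H_{t+h}$, one cannot simply invoke "the next Taylor coefficient". The paper writes $(x)_{n+2}=(x)_2(x+2)_n$ and uses $H\in(1/2,1)$ to bound the Pochhammer ratios uniformly in $h$, dominating the tail by $C\left(\frac{h}{t}\right)^2\left[\,_2F_1(1,4;3;\tfrac{h}{t})+\,_2F_1(1,2.5;3;\tfrac{h}{t})\right]$, which stays bounded as $h\downarrow0$; it also notes that $\beta_{t,t+h}^1$ remains bounded since $H_t$ is fixed and $2-H_t-H_{t+h}>1-H_t>0$. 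With these two corrections and that uniform tail bound supplied, your plan coincides with the paper's proof, and the $h\uparrow0$ case is indeed handled symmetrically, as you say.
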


\begin{proof} First, assume $t>h>0$ for all $h$ small enough. By (\ref{2F1}) 
\begin{IEEEeqnarray*}{lLl}
\,_2F_1(1,2H_{t+h};H_t+H_{t+h};-\frac{h}{t})-\,_2F_1(1,H_{t+h}-H_t;H_t+H_{t+h};-\frac{h}{t})&&\bigskip\\
\hspace{2.5cm}=-\frac{h}{t}+\sum\limits_{n=0}^\infty\frac{(2H_{t+h})_{n+2}-(H_{t+h}-H_t)_{n+2}}{(H_{t+h}+H_t)_{n+2}}\left(-\frac{h}{t}\right)^{n+2}&&
\end{IEEEeqnarray*}
and since $(x)_{n+k}=(x)_k(x+k)_n,$
\begin{IEEEeqnarray*}{lLl}
\left|\sum\limits_{n=0}^\infty\frac{(2H_{t+h})_{n+2}-(H_{t+h}-H_t)_{n+2}}{(H_{t+h}+H_t)_{n+2}}\left(-\frac{h}{t}\right)^{n+2}\right|&&\bigskip\\
\hspace{2cm}\leq\left(\frac{h}{t}\right)^2\Bigg[\frac{(2H_{t+h})_2}{(H_{t+h}+H_t)_2}\sum\limits_{n=0}^\infty\frac{(2H_{t+h}+2)_n}{(H_{t+h}+H_t+2)_n}\left(\frac{h}{t}\right)^n&&\bigskip\\
\hspace{3.5cm}+\frac{|(H_{t+h}-H_t)_2|}{(H_{t+h}+H_t)_2}\sum\limits_{n=0}^\infty\frac{(H_{t+h}-H_t+2)_n}{(H_{t+h}+H_t+2)_n}\left(\frac{h}{t}\right)^n\Bigg]&&\bigskip\\
\hspace{2cm}\leq C\left(\frac{h}{t}\right)^2\left[\,_2F_1(1,4;3;\frac{h}{t})+\,_2F_1(1,2.5;3;\frac{h}{t})\right]&&.
\end{IEEEeqnarray*}
for some $C>0.$ The Gauss hypergeometric series (\ref{2F1}) has 1 as radius of convergence and hence  $\,_2F_1(1,4;3;\frac{h}{t})+\,_2F_1(1,2.5;3;\frac{h}{t})\rightarrow2$ for $h\rightarrow0$. Plugging the above into (\ref{form3}) and \medskip noting that $\btho$ is bounded since $H_t\subset(1/2,1)$ is fixed yields the first part of the claim. The second part is obtained analogously. 
\end{proof}

In the following, let $\vth=\Vtth$ and for $h>0$ let 
\begin{IEEEeqnarray*}{lLl}
&&\widetilde{\sigma}_{t,h}^2=t^{2H_t}+(t+h)^{2H_{t+h}}-\frac{2c_{t+h,t}}{H_{t+h}+H_t}\Bigg[\beta_{t+h,t}^1\beta_{t+h,t}^2t^{H_{t+h}+H_t}+\beta_{t,t+h}^1\beta_{t,t+h}^2(t+h)^{H_{t+h}+H_t}\bigskip\\
&&\hspace{2cm} -\frac{\beta_{t,t+h}^1}{H_{t+h}+H_t-1}\left(\frac{t+h}{t}\right)^{H_{t+h}-H_t+1}h^{H_{t+h}+H_t}\Bigg].
\end{IEEEeqnarray*}
In this definition, the covariance part of $\vth$ was replaced by the asymptotically equivalent function found in Lemma \ref{ah0}.
\begin{theorem}\label{avar} Let $H$ be H\"older continuous with exponent $1\geq\alpha>\sup\limits_tH_t$ and $H\in[a,b]\subset(1/2,1).$ For any fixed $t$ and $h\rightarrow0$,
$$\frac{\vth}{|h|^{2H_t}}\rightarrow1.$$
\end{theorem}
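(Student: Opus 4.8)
The plan is to reduce the statement to the asymptotic expansion in Lemma \ref{ah0}. First I would use Corollary \ref{cfBM} to write
\[
\vth=t^{2H_t}+(t+h)^{2H_{t+h}}-2\RHth ,
\]
and substitute the expansion of $\RHth$. By the very definition of $\widetilde{\sigma}_{t,h}^2$ this gives $\vth=\widetilde{\sigma}_{t,h}^2+E(h)$ with $E(h)=-\frac{2c_{t+h,t}}{H_{t+h}+H_t}\,O(h^{H_{t+h}+H_t+1})$. Since $H\in[a,b]$ the prefactor stays bounded, so $E(h)=O(h^{H_{t+h}+H_t+1})$ and hence $E(h)/h^{2H_t}=O(h^{H_{t+h}-H_t+1})\to0$, because $H_{t+h}-H_t+1\ge 1-(b-a)>0$. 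It therefore suffices to show $\widetilde{\sigma}_{t,h}^2/h^{2H_t}\to1$.

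Next I would split $\widetilde{\sigma}_{t,h}^2=T_3(h)+G(h)$, isolating the term carrying the power $h^{H_{t+h}+H_t}$, namely
\[
T_3(h)=\frac{2c_{t+h,t}}{H_{t+h}+H_t}\,\frac{\btho}{H_{t+h}+H_t-1}\Bigl(\frac{t+h}{t}\Bigr)^{H_{t+h}-H_t+1}h^{H_{t+h}+H_t},
\]
\[
G(h)=t^{2H_t}+(t+h)^{2H_{t+h}}-\frac{2c_{t+h,t}}{H_{t+h}+H_t}\Bigl[\bhto\bhtt\,t^{H_{t+h}+H_t}+\btho\btht\,(t+h)^{H_{t+h}+H_t}\Bigr].
\]
Writing $T_3(h)=D(h)\,h^{2H_t}\,h^{H_{t+h}-H_t}$, continuity of $H$ sends every Beta- and normalization factor in $D(h)$ to its value at $H_{t+h}=H_t$, and the identity $c_H^2\,\beta(H-1/2,2-2H)=H(2H-1)$ (from the definition of $c_H$ and symmetry of $\beta$) shows $D(h)\to1$. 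The only delicate factor is $h^{H_{t+h}-H_t}$: here H\"older continuity enters, since $|H_{t+h}-H_t|\,|\ln h|\le C|h|^{\alpha}|\ln h|\to0$ forces $h^{H_{t+h}-H_t}\to1$. Hence $T_3(h)/h^{2H_t}\to1$.

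The main obstacle is to prove $G(h)=o(h^{2H_t})$. I would view $G(h)=\mathcal F(H_{t+h},h)$ as the restriction of a smooth function $\mathcal F(H',h)$ in which $H_{t+h}$ is replaced by an independent variable $H'$ (all Beta-coefficients depend on $t,s$ only through $H_t,H_s$). Setting $h=0$ makes the exponentials $t^{H_{t+h}+H_t}$ and $(t+h)^{H_{t+h}+H_t}$ coincide, so the bracketed coefficients combine into a single factor which, by (\ref{cov}), equals $2C(H')$ with $C(H')=\int_0^tK_{H'}(t,u)K_{H_t}(t,u)\,du$; thus $\mathcal F(H',0)=t^{2H_t}+t^{2H'}-2C(H')$. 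The key point is that $\mathcal F(\cdot,0)$ attains an interior minimum at $H_t$: letting $Y_1=\int_0^tK_{H'}(t,u)\,dB_u$ and $Y_2=\int_0^tK_{H_t}(t,u)\,dB_u$ against the same Brownian motion, the It\^o isometry gives $\mathrm{Var}(Y_1)=t^{2H'}$, $\mathrm{Var}(Y_2)=t^{2H_t}$ and $\mathrm{Cov}(Y_1,Y_2)=C(H')$, so $\mathcal F(H',0)=\mathrm{Var}(Y_1-Y_2)\ge0$ with value $0$ at $H'=H_t$. Consequently $\partial_{H'}\mathcal F(H_t,0)=0$. A short direct computation, again using $c_H^2\beta(H-1/2,2-2H)=H(2H-1)$, yields the companion identity $\partial_h\mathcal F(H_t,0)=0$, which is essential since an $O(|h|)$ term would not be $o(h^{2H_t})$ for $H_t>\tfrac12$.

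Since $\mathcal F(H_t,0)=0$ and both first-order partials vanish, Taylor's theorem on the compact set $\{H',H_t\in[a,b],\ |h|\le\delta\}$ (on which the Hessian of $\mathcal F$ is bounded) gives
\[
G(h)=\mathcal F(H_{t+h},h)=O\!\left((|H_{t+h}-H_t|+|h|)^2\right)=O\!\left(|h|^{2\alpha}\right),
\]
using $|H_{t+h}-H_t|\le C|h|^{\alpha}$ and $\alpha\le1$. The hypothesis $\alpha>\sup_tH_t\ge H_t$ then gives $2\alpha>2H_t$, so $G(h)/h^{2H_t}=O(|h|^{2\alpha-2H_t})\to0$. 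Adding the three contributions yields $\vth/h^{2H_t}\to1$ for $h\downarrow0$; the case $h\uparrow0$ is treated identically from the second expansion in Lemma \ref{ah0}, with $|h|^{2H_t}$ in the denominator. The delicate step, and the one I expect to carry the real content, is the vanishing of $\partial_{H'}\mathcal F(H_t,0)$: the probabilistic minimum argument makes it transparent, whereas a direct differentiation of the Beta-coefficients would be cumbersome.
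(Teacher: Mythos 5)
Your proposal is correct in substance and shares the paper's outer skeleton --- reduce $\vth$ to $\widetilde{\sigma}_{t,h}^2$ via Lemma \ref{ah0}, then prove $\widetilde{\sigma}_{t,h}^2/|h|^{2H_t}\rightarrow1$ --- but the heart of that second step is handled by a genuinely different argument. The paper isolates the same singular term (your $T_3$) and then kills the remainder $G(h)$ by explicit one-variable Taylor expansions of the coefficient functions $f(H_{t+h},H_t)$ and $f(H_t,H_{t+h})$ about the diagonal, together with expansions of the powers and logarithms, relying on a hands-on cancellation of all first-order terms; the outcome is $G(h)=O(h^{1+\alpha})+O(h^{2\alpha})$. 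You instead embed $G(h)=\mathcal{F}(H_{t+h},h)$ into a smooth function of two independent variables and obtain $G(h)=O(|h|^{2\alpha})$ from the second-order Taylor theorem, once $\mathcal{F}(H_t,0)=0$ and $\nabla\mathcal{F}(H_t,0)=0$ are known; the vanishing of $\partial_{H'}\mathcal{F}(H_t,0)$ comes from reading $\mathcal{F}(H',0)=\mathrm{Var}(Y_1-Y_2)\geq0$ as a variance with an interior zero at $H'=H_t$. This is an elegant structural replacement for the paper's bookkeeping: it explains \emph{why} the first-order terms must cancel rather than verifying that they do. What it costs is the identity $\mathcal{F}(H',0)=t^{2H_t}+t^{2H'}-2\int_0^tK_{H'}(t,u)K_{H_t}(t,u)\,du$, which you attribute simply to (\ref{cov}). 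Strictly speaking, (\ref{cov}) only covers a single parameter function evaluated at two distinct times, whereas here two different Hurst values sit at the same time $t$; the identity is true, but it needs its own short proof --- either rerun the Lemma \ref{AppB1}(ii) computation with the two constants $H'$ and $H_t$ in the two kernels, or take the limit $s\uparrow t$ in form (\ref{form2}) for a parameter function with $H(s)=H'$, $H(t)=H_t$, using dominated convergence on both sides of (\ref{cov}). So this is a fixable gap in justification, not a flaw in the approach.

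Two further small points. First, the case $t=0$ is part of the statement but is excluded both by your argument and by Lemma \ref{ah0}; the paper disposes of it in one line, since $\sigma_{0,h}^2/|h|^{2H_0}=|h|^{2(H_h-H_0)}\rightarrow1$ by exactly the H\"older estimate $|H_h-H_0|\,|\ln|h||\rightarrow0$ that you use for the factor $h^{H_{t+h}-H_t}$. Second, when invoking Taylor's theorem you should say explicitly that the bound holds on a fixed convex compact set such as $[a,b]\times[-\delta,\delta]$ with $0<\delta<t$, on which $\mathcal{F}$ is smooth with bounded Hessian; since $H_{t+h}\in[a,b]$ and $h\rightarrow0$, the segment joining $(H_t,0)$ to $(H_{t+h},h)$ eventually lies in this set, so the uniform constant is legitimate. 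With these repairs your proof is complete and, in the critical step, genuinely different from (and arguably more transparent than) the paper's.
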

\begin{proof} First, assume $h>0$. If $t=0,$ then $\frac{\sigma_{0,h}^2}{|h|^{2H_0}}=h^{2H_h-2H_0}\rightarrow1$, hence let $t>0$.\\

\noindent\underline{Step 1}: $\widetilde{\sigma}_{t,h}^2/|h|^{2H_t}\rightarrow1$ for $h\rightarrow0.$\medskip\\
The coefficient functions 
$$f(H_{t+h},H_t)=\frac{2c_{t+h,t}\beta_{t+h,t}^1\beta_{t+h,t}^2}{H_{t+h}+H_t}\mbox{ and }f(H_t,H_{t+h})=\frac{2c_{t+h,t}\beta_{t,t+h}^1\beta_{t,t+h}^2}{H_{t+h}+H_t}$$
are both smooth with bounded partial derivatives of any order on $[a,b]\times[a,b].$ Denoting $\partial_xf(x,y)|_{x=y}$ by $g(y)$, it follows by Taylor's theorem that
\begin{eqnarray*}
f(H_{t+h},H_t)&=&1+g(H_t)(H_{t+h}-H_t)+ O((H_{t+h}-H_t)^2)\mbox{ and}\bigskip\\
f(H_t,H_{t+h})&=&1+g(H_{t+h})(H_t-H_{t+h})+ O((H_{t+h}-H_t)^2)\bigskip\\
&=&1+g(H_t)(H_t-H_{t+h})+ O((H_{t+h}-H_t)^2).
\end{eqnarray*}
By the H\"older continuity of $H$ it follows that 

\begin{IEEEeqnarray*}{lLl}
&&\widetilde{\sigma}_{t,h}^2=t^{2H_t}-t^{H_{t+h}+H_t}+(t+h)^{2H_{t+h}}-(t+h)^{H_{t+h}+H_t}\bigskip\\
&&\ \ \ \ \ \ \ \ \ \ \ \ \ -g(H_t)(H_{t+h}-H_t)(t^{H_{t+h}+H_t}-(t+h)^{H_{t+h}+H_t})\bigskip\\
&&\ \ \ \ \ \ \ \ \ \ \ \ \ \ \ \ \ \  +\frac{2c_{t+h,t}}{(H_{t+h}+H_t)}\frac{\beta_{t,t+h}^1}{(H_{t+h}+H_t-1)}\left(\frac{t+h}{t}\right)^{1+H_{t+h}-H_t}h^{H_{t+h}+H_t}+O(h^{2\alpha}).
\end{IEEEeqnarray*}

\noindent Using Taylor expansions to see 
\begin{equation*}
     \begin{aligned}
       & t^{2H_t}-t^{H_{t+h}+H_t}=\ln{(t)}t^{2H_t}(H_t-H_{t+h})+O(h^{2\alpha}),\bigskip\\
       & (t+h)^{2H_{t+h}}-(t+h)^{H_{t+h}+H_t}=-\ln{(t+h)}(t+h)^{2H_{t+h}}(H_t-H_{t+h})+O(h^{2\alpha}),\bigskip\\
       & \ln{(t+h)}=\ln{(t)}+O(h)\mbox{ and }\bigskip\\
       & t^{2H_t}-(t+h)^{2H_{t+h}}=t^{2H_t}-t^{2H_{t+h}}+O(h)=O(h^{\alpha})+O(h)\mbox{ and}\bigskip\\
       & t^{H_{t+h}+H_t}-(t+h)^{H_{t+h}+H_t}=O(h),
     \end{aligned}
\end{equation*}
one obtains
\begin{IEEEeqnarray*}{lLl}
&&\widetilde{\sigma}_{t,h}^2=\frac{2c_{t+h,t}}{(H_{t+h}+H_t)}\frac{\beta_{t,t+h}^1}{(H_{t+h}+H_t-1)}\left(\frac{t+h}{t}\right)^{1+H_{t+h}-H_t}h^{H_{t+h}+H_t}+O(h^{1+\alpha})+O(h^{2\alpha}),
\end{IEEEeqnarray*}
and hence
\begin{IEEEeqnarray*}{lLl}
\lim\limits_{h\rightarrow0}\frac{\widetilde{\sigma}_{t,h}^2}{h^{2H_t}}&=&\lim\limits_{h\rightarrow0}\frac{2c_{t+h,t}}{(H_{t+h}+H_t)}\frac{\beta_{t,t+h}^1}{(H_{t+h}+H_t-1)}\left(\frac{t+h}{t}\right)^{1+H_{t+h}-H_t}h^{H_{t+h}-H_t}=1.
\end{IEEEeqnarray*}

\noindent\underline{Step 2}: $\vth/\widetilde{\sigma}_{t,h}^2\rightarrow1.$
$$\frac{\vth}{\widetilde{\sigma}_{t,h}^2}=1+\frac{\vth-\widetilde{\sigma}_{t,h}^2}{\widetilde{\sigma}_{t,h}^2}=1+\frac{O(h^{1+H_{t+h}+H_t})}{h^{2H_t}}\left(\frac{h^{2H_t}}{\widetilde{\sigma}_{t,h}^2}\right)=1+o(1)\frac{h^{2H_t}}{\widetilde{\sigma}_{t,h}^2}\rightarrow1$$

\noindent for $h\rightarrow0$ by Lemma \ref{ah0}.\\

\noindent Combining steps 1 and 2, the statement follows for any null sequence with $h>0$ for all $h$:
$$\frac{\vth}{h^{2H_t}}=\frac{\vth}{\widetilde{\sigma}_{t,h}^2}\left(\frac{\widetilde{\sigma}_{t,h}^2}{h^{2H_t}}\right)\rightarrow1.$$

\noindent If the second part of Lemma \ref{ah0} is used to define $\widetilde{\sigma}_{t,h}^2$, then the claim follows in the case $h<0$ for all $h$ by the same arguments as above. Finally, the statement holds for any sequence $h\rightarrow0$ since
$$\left|\frac{\sigma^2_{t,h}}{|h|^{2H_t}}-1\right|\leq\left|\frac{\sigma^2_{t,|h|}}{|h|^{2H_t}}-1\right|+\left|\frac{\sigma^2_{t,-|h|}}{|h|^{2H_t}}-1\right|\rightarrow0\mbox{ for }h\rightarrow0.$$
\end{proof}

\begin{corollary} Let $H$ be H\"older continuous with exponent $1\geq\alpha>\sup\limits_tH_t$ and $H\in[a,b]\subset(1/2,1).$ For any fixed $t$ and $h\rightarrow0$
$$\frac{X^H_{t+h}-X^H_t}{|h|^{H_t}}\stackrel{\mathcal{L}}{\longrightarrow}\mathcal{N}(0,1).$$
\end{corollary}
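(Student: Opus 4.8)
The plan is to exploit that $X^H$ is a centered Gaussian process, so that the statement collapses to the convergence of a single scalar variance, which Theorem~\ref{avar} already provides. First I would observe that the increment $X^H_{t+h}-X^H_t$ is a linear functional of the Gaussian process $X^H$ and is therefore itself a centered Gaussian random variable. Dividing by the deterministic constant $|h|^{H_t}$ keeps it centered Gaussian, so that for each fixed $t$ and each $h$ the normalized increment
$$\frac{X^H_{t+h}-X^H_t}{|h|^{H_t}}$$
has law $\mathcal{N}\left(0,\frac{\vth}{|h|^{2H_t}}\right)$, where $\vth=Var(X^H_{t+h}-X^H_t)$ is exactly the quantity appearing in Theorem~\ref{avar}.

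Next I would invoke Theorem~\ref{avar}, which under precisely the stated hypotheses (namely, $H$ is $\alpha$-Hölder with $\alpha>\sup_t H_t$ and $H\in[a,b]\subset(1/2,1)$) gives $\vth/|h|^{2H_t}\to 1$ as $h\to 0$. It then remains only to pass from convergence of variances to convergence in law. For centered Gaussians this is immediate through characteristic functions: the characteristic function of the normalized increment, evaluated at $u\in\mathbb{R}$, equals $\exp\!\left(-\tfrac{1}{2}u^2\,\vth/|h|^{2H_t}\right)$, which converges pointwise to $\exp\!\left(-\tfrac{1}{2}u^2\right)$, the characteristic function of $\mathcal{N}(0,1)$. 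Lévy's continuity theorem then yields convergence in distribution, as claimed.

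I do not expect any genuine obstacle here: the entire analytic difficulty — the delicate Taylor expansions of the coefficient functions $f(H_{t+h},H_t)$, the cancellation of the non-fractional terms, and the isolation of the leading $h^{H_{t+h}+H_t}$ behaviour — has already been absorbed into Theorem~\ref{avar}. The only point deserving a word of care is the elementary but essential fact that for Gaussian laws the mean and variance determine the distribution, so that convergence of the second moment (with the mean fixed at zero) forces convergence in distribution; once this is noted, the corollary follows directly.
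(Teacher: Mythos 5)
Your proposal is correct and follows essentially the same route as the paper, whose proof is exactly the combination of Theorem~\ref{avar} with L\'evy's continuity theorem. You simply make explicit the steps the paper leaves implicit: that the normalized increment is centered Gaussian with variance $\vth/|h|^{2H_t}$, and that pointwise convergence of the Gaussian characteristic functions $\exp(-\tfrac{1}{2}u^2\,\vth/|h|^{2H_t})$ to $\exp(-\tfrac{1}{2}u^2)$ yields the claimed convergence in law.
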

\begin{proof} The claim follows by L\'{e}vy's continuity theorem combined with Theorem 
\ref{avar}.
\end{proof}

The next corollary uses Theorem \ref{avar} in order to show that under the H\"older continuity assumption on $H$, the local regularity of the path of \emph{fBMvH} at each point $t$ is governed by the function $H$. $H_t$ is identified as the local H\"older exponent for $X^H$ at time $t$. In \cite{multifrac} and \cite{bouf}, this result was obtained using the representation of the process as an integral with respect to \emph{BM} and an alternative definition of local H\"older exponent, and an asymptotic self-similarity result respectively.
 
\begin{definition}\label{lHE} A function $f$ has local H\"older exponent $\gamma$ at $t$ if
$$\gamma=\sup\,\{\alpha:\lim\limits_{h\rightarrow0}\frac{|f(t+h)-f(t)|}{|h|^\alpha}=0\}.$$
This is equivalent to the following:
$$\lim\limits_{h\rightarrow0}\frac{|f(t+h)-f(t)|}{|h|^\alpha}=0\mbox{ for all }0<\alpha<\gamma\mbox{ and }$$
$$\limsup\limits_{h\rightarrow0}\frac{|f(t+h)-f(t)|}{|h|^\alpha}=\infty\mbox{ for all }\alpha>\gamma>0.$$
\end{definition}

In the following corollary, $X^H$ is again assumed to be separable. 

\begin{corollary} Let $H$ be H\"older continuous with exponent $1\geq\alpha>\sup\limits_tH_t$ and $H\in[a,b]\subset(1/2,1).$ Then, for each $t>0$ the local H\"older exponent of $X^H$ at $t$ is almost surely $H_t$.
\end{corollary}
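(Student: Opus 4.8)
The plan is to work directly from Definition \ref{lHE}, keeping $t>0$ fixed and working with the separable, a.s.\ continuous modification of $X^H$ available under the standing H\"older hypothesis. It suffices to establish two complementary almost sure statements: (a) the \emph{lower bound} $\lim_{h\to0}|X^H_{t+h}-X^H_t|/|h|^{\lambda}=0$ for every $\lambda<H_t$, and (b) the \emph{upper bound} $\limsup_{h\to0}|X^H_{t+h}-X^H_t|/|h|^{\lambda}=\infty$ for every $\lambda>H_t$. Together, by Definition \ref{lHE}, these pin the local H\"older exponent at exactly $H_t$. Since both conditions are monotone in $\lambda$, I would prove each along a countable sequence $\lambda_k\uparrow H_t$ (resp.\ $\lambda_k\downarrow H_t$) and intersect the corresponding full-measure events.

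For (a) I would \emph{localize} and invoke the H\"older modification result recalled at the start of this subsection. Given $\lambda<H_t$, continuity of $H$ yields a compact neighborhood $[a,b]\ni t$ on which $\min_{s\in[a,b]}H_s>\beta$ for some $\beta\in(\lambda,H_t)$. Because the standing hypothesis $\alpha>\sup_s H_s$ forces $\alpha\wedge\min_{[a,b]}H_s=\min_{[a,b]}H_s>\beta$, the modification from \cite{multifrac} is a.s.\ H\"older continuous of exponent $\beta$ on $[a,b]$, so $|X^H_{t+h}-X^H_t|\le C_\omega|h|^{\beta}$ for all small $h$. Dividing by $|h|^{\lambda}$ and using $\lambda<\beta$ gives the limit $0$; letting $\beta\uparrow H_t$ along a countable sequence yields (a) on a full-measure event.

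For (b), which I expect to be the crux, I would exploit the sharp variance asymptotic of Theorem \ref{avar}. Choose a geometric sequence $h_n=r^n\downarrow0$ with $r\in(0,1)$ and consider the increments over the disjoint annuli $[t+h_{n+1},t+h_n]$, namely $\Delta_n=X^H_{t+h_n}-X^H_{t+h_{n+1}}$. A local version of Theorem \ref{avar}, obtained by applying Lemma \ref{ah0} at the two nearby points, should give $\mathrm{Var}(\Delta_n)\asymp h_n^{2H_t}$, so $\Delta_n/h_n^{H_t}$ is centered Gaussian with variance bounded away from $0$; hence each event $A_n=\{|\Delta_n|>h_n^{H_t}\}$ has probability at least a fixed positive constant, whence $\sum_n P(A_n)=\infty$. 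On $A_n$ at least one of $|X^H_{t+h_n}-X^H_t|$, $|X^H_{t+h_{n+1}}-X^H_t|$ exceeds $\tfrac12 h_n^{H_t}$, so dividing by $|h|^{\lambda}$ with $\lambda>H_t$ produces a factor $\gtrsim h_n^{H_t-\lambda}\to\infty$. Thus (b) reduces to showing that $A_n$ occurs for infinitely many $n$ almost surely.

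The main obstacle is precisely this last step, because the $\Delta_n$ are \emph{not} independent: \emph{fBMvH} is long-range dependent with positively correlated increments, so a naive second Borel--Cantelli lemma does not apply. I would estimate $\mathrm{Cov}(\Delta_n,\Delta_m)$ from the explicit hypergeometric form (\ref{form3}) together with the expansions of Lemma \ref{ah0}; the corresponding computation for the constant-$H$ fBM kernel suggests the normalized correlations decay geometrically, like $r^{(1-H_t)|n-m|}$, so the correlation sum is controlled and a correlated (Kochen--Stone) Borel--Cantelli lemma gives $P(\limsup_n A_n)=1$. Making these covariance bounds rigorous for a \emph{variable} $H$ --- where the exact cancellations available for constant-$H$ fBM are lost and one must track the error terms of Lemma \ref{ah0} uniformly over the relevant dyadic scales --- is the technical heart of the argument; everything else reduces to the single-increment asymptotics already established in Theorem \ref{avar}.
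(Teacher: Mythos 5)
Your lower-bound argument (a) is correct, and it genuinely differs from the paper's: you localize the global H\"older-continuity result of \cite{multifrac} around $t$, while the paper works directly with Gaussian tail estimates, Theorem \ref{avar}, and the first Borel--Cantelli lemma along an arbitrary null sequence, then invokes separability to pass from sequences to $h\to0$. Both routes are valid; the one point you should make explicit is the transfer from the a.s.\ H\"older-continuous modification back to the separable process $X^H$ of the corollary: by separability, the $\limsup$ in Definition \ref{lHE} is a.s.\ determined by countably many time points, on which any two modifications a.s.\ agree, so the local exponent is the same for both. (Separable modifications of one another need not be indistinguishable, so this step is not entirely vacuous.)

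The upper bound (b), however, contains a genuine gap, and the difficulty you flag is self-inflicted. Your plan needs: (i) $\mathrm{Var}(\Delta_n)\asymp h_n^{2H_t}$, which does \emph{not} follow from Theorem \ref{avar} as stated, because the base point $t+h_{n+1}$ of your increment moves with $n$ --- you would need a version of Lemma \ref{ah0}/Theorem \ref{avar} uniform over base points near $t$; (ii) the covariance decay estimates for the $\Delta_n$, which you explicitly defer (``the technical heart''), so the proposal stops exactly where the proof would have to start; and (iii) even granting (i)--(ii), Kochen--Stone yields only $P(\limsup_n A_n)>0$, not $P(\limsup_n A_n)=1$; you would additionally need a zero--one law for this Gaussian event. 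None of this machinery is necessary. Since the goal is only $\limsup_{h\to 0}|X^H_{t+h}-X^H_t|/|h|^{\gamma}=\infty$, it suffices to blow up along \emph{some} deterministic subsequence, and for that the marginal laws alone suffice: for $\gamma>H_t$, Theorem \ref{avar} gives $|h_n|^{\gamma}/\sigma_{t,h_n}\to0$, hence $P\bigl(|X^H_{t+h_n}-X^H_t|\le |h_n|^{\gamma}/\varepsilon\bigr)=2\Phi\bigl(|h_n|^{\gamma}/(\varepsilon\sigma_{t,h_n})\bigr)-1\to 0$, i.e.\ $|h_n|^{\gamma}/|X^H_{t+h_n}-X^H_t|\to 0$ in probability; passing to a subsequence along which this convergence is almost sure yields $\limsup_n|X^H_{t+h_n}-X^H_t|/|h_n|^{\gamma}=\infty$ a.s. This is exactly what the paper does: no independence, no correlation bounds, and no second Borel--Cantelli argument of any kind is needed for the upper bound. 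As written, your part (b) is a program, not a proof.
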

\begin{proof} 
Let $t\geq0$ be fixed and $0<\gamma<H_t.$ Further, let $(h_m)_{m\geq1}$ be a null sequence and 
$$E=\left\{\limsup\limits_{m\rightarrow\infty}\frac{|X^H_{t+h_m}-X^H_t|}{|h_m|^\gamma}>0\right\}=\bigcup\limits_{n=1}^\infty A_n,$$
where 
$$A_n=\limsup\limits_{m\rightarrow\infty}B^n_m,\ B^n_m=\left\{\frac{|X^H_{t+h_m}-X^H_t|}{|h_m|^\gamma}>\frac{1}{n}\right\}.$$
In order to show $P(E)=0$, $P(A_n)=0$ for all $n$ will be established first. Let $Z\sim\mathcal{N}(0,1)$ and let $\Phi$ again denote the cumulative distribution function of a standard normal. It follows that 
\begin{eqnarray*}
P(B^n_m)&=&1-P\left(-\frac{|h_m|^\gamma}{n}\leq X^H_{t+h_m}-X^H_t\leq\frac{|h_m|^\gamma}{n}\right)\bigskip\\
&=&1-P\left(-\frac{|h_m|^\gamma}{n\sigma_{t,h_m}}\leq Z\leq\frac{|h_m|^\gamma}{n\sigma_{t,h_m}}\right)\bigskip\\
&=&2\left(1-\Phi(\frac{|h_m|^\gamma}{n\sigma_{t,h_m}})\right)\bigskip\\
&\leq&2\frac{n\sigma_{t,h_m}}{|h_m|^\gamma}\frac{1}{\sqrt{2\pi}}e^{-\frac{1}{2}(|h_m|^\gamma/n\sigma_{t,h_m})^2}
\end{eqnarray*} 
by the known inequality $1-\Phi(x)\leq\frac{1}{\sqrt{2\pi}}\frac{1}{x}e^{-\frac{x^2}{2}}$ for $x>0$.\\
By Theorem \ref{avar} 
$$\sum\limits_{m=1}^\infty P(B^n_m)<\infty$$ and hence that $P(A_n)=0$ by the Borel-Cantelli lemma. Consequently, $P(E)=0.$ Thus,
$$\limsup\limits_{m\rightarrow\infty}\frac{|X^H_{t+h_m}-X^H_t|}{|h_m|^\gamma}=0\ \mbox{a.s.}$$
Since the process is separable and the sequence $(h_m)_{m\geq1}$ was chosen arbitrarily, it follows that $\limsup\limits_{h\rightarrow0}\frac{|X^H_{t+h}-X^H_t|}{|h|^\gamma}=0$ and hence
$$\lim\limits_{h\rightarrow0}\frac{|X^H_{t+h}-X^H_t|}{|h|^\gamma}=0\mbox{ for }\gamma<H_t.$$

\noindent Now, let $\gamma>H_t$ and  $(h_n)_{n\geq1}$ be a null sequence. Then, $$\frac{|h_n|^\gamma}{|X^H_{t+h_n}-X^H_t|}\stackrel{P}{\longrightarrow}0\mbox{ for }n\rightarrow\infty,$$
\vspace{.1cm}

\noindent since for any fixed $\varepsilon>0$

\begin{eqnarray*}
P\left(\frac{|h_n|^\gamma}{|X^H_{t+h_n}-X^H_t|}>\varepsilon\right)&=&P\left(-\frac{|h_n|^\gamma}{\varepsilon}\leq X^H_{t+h_m}-X^H_t\leq\frac{|h_n|^\gamma}{\varepsilon}\right)\bigskip\\
&=&P\left(-\frac{|h_n|^\gamma}{\varepsilon\sigma_{t,h_n}}\leq Z\leq\frac{|h_n|^\gamma}{\varepsilon\sigma_{t,h_n}}\right)\bigskip\\
&=&2\Phi\left(\frac{|h_n|^\gamma}{\varepsilon\sigma_{t,h_n}}\right)-1\rightarrow0 \mbox{ for }n\rightarrow\infty
\end{eqnarray*}
by Theorem \ref{avar}. Consequently, there exists a subsequence $(h_{n_k})_{k\geq1}$ for which
$$\frac{|h_{n_k}|^\gamma}{|X^H_{t+h_{n_k}}-X^H_t|}\stackrel{k\rightarrow\infty}{\longrightarrow}0\mbox{ a.s.}$$
and hence
$$\limsup\limits_{n\rightarrow\infty}\frac{|X^H_{t+h_n}-X^H_t|}{|h_n|^\gamma}\geq\limsup\limits_{k\rightarrow\infty}\frac{|X^H_{t+h_{n_k}}-X^H_t|}{|h_{n_k}|^\gamma}=\infty.$$
\end{proof}

The next lemma concerns the asymptotic behavior of $\RH(t+h,t)$ for $h\rightarrow\infty.$ It will be useful in establishing the long-range dependence property of \emph{fBMvH}.\\

For two real-valued functions $f_1$ and $f_2$ on $\mathbb{R}$, $f_1\sim f_2$ will denote that there exist constants $C>c>0$  such that $0<c<f_1(h)/f_2(h)<C<\infty$ for all $h$ sufficiently large.

\begin{lemma}\label{ahi} Let $H:[0,\infty)\rightarrow[a,b]\subset(1/2,1).$ For any $t$ fixed 
\begin{IEEEeqnarray}{lLl}\label{asy}
R_H(t+h,t)=g_{t,1}(h)h^{2H_{t+h}-1}+\mathbbm{1}_{\{H_t=H_{t+h}\}}g_{t,2}(h)[(t+h)^{2H_{t+h}}-(t+h)h^{2H_{t+h}-1}]+g_{t,3}(h),&&\nonumber
\end{IEEEeqnarray}
where $g_{t,1}(h)\sim1$, $g_{t,2}(h)\sim1$ and $g_{t,3}(h)\sim1$ as $h\rightarrow\infty$.
\end{lemma}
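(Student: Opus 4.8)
The plan is to work from the closed form (\ref{form1.1}), which for $t+h\ge t$ reads
\begin{equation*}
\RHth = c_{t+h,t}\left[\tilde{\beta}_{t+h,t}\frac{t^{H_{t+h}+H_t}}{H_{t+h}+H_t} + \btho\int_t^{t+h}\int_0^t\left(\frac{y}{z}\right)^{H_{t+h}-H_t}(y-z)^{H_{t+h}+H_t-2}\,dz\,dy\right].
\end{equation*}
Without loss of generality I would take $h>0$; the case $h<0$ follows by interchanging the roles of $t$ and $t+h$. First I would argue that the non-integral term is $\sim 1$: since $t$ is fixed and $H$ takes values in the compact set $[a,b]\subset(1/2,1)$, the factor $t^{H_{t+h}+H_t}$ together with the coefficients $c_{t+h,t}=c_{H_{t+h}}c_{H_t}$ and $\tilde{\beta}_{t+h,t}$ are continuous and strictly positive functions of $(H_{t+h},H_t)\in[a,b]^2$, hence bounded above and below by positive constants. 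This term can therefore be folded into $g_{t,3}$, and all of the growth of $\RHth$ must come from the double integral, which I denote $I(h)$.

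To extract the asymptotics of $I(h)$ I would substitute $z=yw$ in the inner integral. Writing $\mu=H_{t+h}-H_t$ and using the hypergeometric representation of the incomplete Beta function, this yields
\begin{equation*}
I(h)=\frac{t^{1-\mu}}{1-\mu}\int_t^{t+h}y^{2H_{t+h}-2}\,{}_2F_1\!\left(1-\mu,\,2-H_{t+h}-H_t;\,2-\mu;\,\tfrac{t}{y}\right)dy .
\end{equation*}
Expanding the (convergent, since $t/y\le 1$) hypergeometric series and integrating term by term, the $n=0$ term contributes $\int_t^{t+h}y^{2H_{t+h}-2}\,dy=\frac{(t+h)^{2H_{t+h}-1}-t^{2H_{t+h}-1}}{2H_{t+h}-1}$, whose leading part is $\frac{h^{2H_{t+h}-1}}{2H_{t+h}-1}$ because $2H_{t+h}-1\in(0,1)$, while every term with $n\ge1$ carries $y^{2H_{t+h}-2-n}$ with exponent below $-1$, so its integral over $[t,t+h]$ stays bounded as $h\to\infty$. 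Collecting the growing piece into the coefficient of $h^{2H_{t+h}-1}$ and the bounded remainder (with the non-integral term) into the constant piece gives, for those $h$ with $H_{t+h}\neq H_t$, the representation $\RHth=g_{t,1}(h)h^{2H_{t+h}-1}+g_{t,3}(h)$ with $g_{t,1}(h)=\frac{c_{t+h,t}\,\btho\,t^{1-\mu}}{(1-\mu)(2H_{t+h}-1)}$, which is manifestly $\sim 1$ on $[a,b]^2$.

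For the remaining $h$, those with $H_{t+h}=H_t$, the incomplete-Beta reduction degenerates, and I would instead read the value directly from Corollary \ref{cfBM}: with $H:=H_t=H_{t+h}$, $\RHth=\tfrac12\bigl(t^{2H}+(t+h)^{2H}-h^{2H}\bigr)$. Keeping $(t+h)^{2H}$ unexpanded and using $(t+h)^{2H}-h^{2H}=\bigl[(t+h)^{2H}-(t+h)h^{2H-1}\bigr]+t\,h^{2H-1}$ puts this in exactly the claimed form, with $g_{t,2}(h)=\tfrac12$, coefficient $\tfrac{t}{2}$ of $h^{2H-1}$, and constant $\tfrac12 t^{2H}$, all $\sim 1$. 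One checks consistency with the general computation by substituting $H_{t+h}=H_t$ into the reduction of $I(h)$ and invoking $c_H^2=H(2H-1)/\beta(2-2H,H-1/2)$, which reproduces Corollary \ref{cfBM}.

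The hard part will be twofold. The representation must hold simultaneously for all large $h$ while the indicator $\mathbbm{1}_{\{H_t=H_{t+h}\}}$ switches on and off with $h$ (for a general measurable $H$ the set $\{h:H_{t+h}=H_t\}$ is arbitrary), so the bounds $g_{t,i}\sim 1$ must be uniform as $H_{t+h}$ ranges over all of $[a,b]$, including the limit $H_{t+h}\to H_t$; this is precisely the regime in which a form-(\ref{form3})-based argument would see its hypergeometric coefficients blow up, whereas the incomplete-Beta reduction above stays well behaved, which is my reason for preferring it. The genuinely delicate point is showing that the constant term $g_{t,3}$ is bounded not merely above but below by a positive constant: it is assembled from pieces of mixed sign (the non-integral term, the $-t^{2H_{t+h}-1}/(2H_{t+h}-1)$ contribution, and the convergent $n\ge1$ tails), so establishing $g_{t,3}\sim 1$ requires combining these estimates with the strict positivity of $\RHth$ and the compactness of the parameter range.
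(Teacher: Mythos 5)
Your route is genuinely different from the paper's and most of it is sound. The paper starts from form (\ref{form3}) and applies the connection formula of Lemma \ref{Abramo}$(vi)$ to the two hypergeometric terms, then regroups; you start from form (\ref{form1.1}), reduce the inner integral to an incomplete Beta function, and expand under the integral. Your leading coefficient $g_{t,1}(h)=\frac{c_{t+h,t}\,\btho\,t^{1-\mu}}{(1-\mu)(2H_{t+h}-1)}$ agrees asymptotically with the paper's (there $g_{t,1}$ carries the factor $f_t(h)\rightarrow\frac{1}{2H_{t+h}-1}+\frac{1}{1-\mu}=\frac{H_t+H_{t+h}}{(2H_{t+h}-1)(1-\mu)}$ off the diagonal, which cancels the $\frac{1}{H_t+H_{t+h}}$ prefactor), and your treatment of $H_t=H_{t+h}$ via Corollary \ref{cfBM} reproduces exactly the indicator term with $g_{t,2}=\frac12$, the same value the paper obtains. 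Three smaller remarks: the incomplete-Beta reduction does \emph{not} degenerate at $\mu=0$ (then $1-\mu=1$ and everything is regular — you separate this case for the wrong reason, though separating it is harmless); the paper's form-(\ref{form3}) coefficients do not in fact blow up as $H_{t+h}\rightarrow H_t$, since the offending Gamma factors occur in ratios (e.g. $\beta^*=\beta(H_{t+h}+H_t-1,1-2H_{t+h})\rightarrow0$ because $\Gamma(H_t-H_{t+h})$ in its denominator blows up); and for the $n\geq1$ tail you need not only per-term boundedness but summability of the bounds, which holds because the summands are $O\bigl(n^{-1-H_t-H_{t+h}}\bigr)$ uniformly on $[a,b]^2$ — worth a line.

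The genuine gap is exactly where you flag it, and the mechanism you propose cannot close it. Proving $g_{t,3}\sim1$ requires a lower bound $g_{t,3}(h)\geq c>0$ for all large $h$, and strict positivity of $\RHth$ gives no information: off the diagonal $\RHth=g_{t,1}(h)h^{2H_{t+h}-1}+g_{t,3}(h)$ with the first term tending to $+\infty$, so positivity of the left-hand side is perfectly compatible with $g_{t,3}$ being negative, or even drifting to $-\infty$ slower than $h^{2a-1}$. What is actually needed is to sum your decomposition into a closed-form limit function of $(H_t,H_{t+h})\in[a,b]^2$, check that it extends continuously across the diagonal (where it equals $\frac12 t^{2H_t}$), and prove it is strictly positive pointwise; only then does compactness give a uniform bound. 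That pointwise positivity is a real computation, not a soft argument: in the paper's closed form it amounts to $\bhto\bhtt+\btho\beta^*>0$, which is nontrivial precisely because $\beta^*<0$ whenever $H_{t+h}<H_t$; it can be verified via the reflection formula, which reduces the inequality to $\sin(\pi H_{t+h})|\cos(\pi H_t)|+\sin(\pi H_t)|\cos(\pi H_{t+h})|>0$. (The paper itself glosses this point with "clearly", but its approach at least delivers an explicit $g_{t,3}$ on which the check can be run; in your decomposition $g_{t,3}$ is left as an implicit combination of sign-mixed pieces — the positive non-integral term and $n\geq1$ tails against the negative $-t^{2H_{t+h}-1}/(2H_{t+h}-1)$ contribution — so the positivity argument cannot even begin until that summation is carried out.)
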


\begin{proof} For simplicity, let $\beta^*=\beta(H_{t+h}+H_t-1,1-2H_{t+h})$. Note that the Beta function is defined for non-integer negative arguments by $\beta(a,b)=\frac{\Gamma(a)\Gamma(b)}{\Gamma(a+b)},$ where $\Gamma(\cdot)$ denotes the Gamma function. $\beta^*$ is bounded since $H\in[a,b].$ Applying Lemma \ref{Abramo}, part $(vi)$, Appendix B, and using that $\,_2F_1(a,b;c;z)=1$ whenever $a=0$ or $b=0$ (see (\ref{2F1}))
\begin{IEEEeqnarray}{lll}
\,_2F_1(1,2H_{t+h};H_{t+h}+H_t;-\frac{h}{t})&& \nonumber\\
\ \ =(H_{t+h}+H_t-1)\left(\frac{t}{t+h}\right)\left[\frac{\,_2F_1(1,H_t-H_{t+h};2-2H_{t+h};\frac{t}{t+h})}{2H_{t+h}-1}\right.&&\label{eq:1}\\
\ \ \ \ \ \ +\: \left.\mathbbm{1}_{\{H_t\neq H_{t+h}\}}\beta^*\left(\frac{t}{t+h}\right)^{2H_{t+h}-1}\left(\frac{h}{t+h}\right)^{1-H_{t+h}-H_t}\right]&&\nonumber
\end{IEEEeqnarray}
and 
\begin{IEEEeqnarray}{lll}
\,_2F_1(1,H_{t+h}-H_t;H_{t+h}+H_t;-\frac{h}{t})&& \nonumber\\
\ \ =\mathbbm{1}_{\{H_t=H_{t+h}\}}+\mathbbm{1}_{\{H_t\neq H_{t+h}\}}(H_{t+h}+H_t-1)\left(\frac{t}{t+h}\right)&&\label{eq:2}\\
\ \ \ \ \ \ \times\:\left[\frac{\,_2F_1(1,2H_{t};2-H_{t+h}+H_t;\frac{t}{t+h})}{H_{t+h}-H_t-1}+\btht\left(\frac{t}{t+h}\right)^{H_{t+h}-H_t-1}\left(\frac{h}{t+h}\right)^{1-H_{t+h}-H_t}\right]&&\nonumber
\end{IEEEeqnarray}
are obtained. Let
\begin{eqnarray*}
f_t(h)&=&\frac{\,_2F_1(1,H_t-H_{t+h};2-2H_{t+h};\frac{t}{t+h})}{2H_{t+h}-1}+\mathbbm{1}_{\{H_t\neq H_{t+h}\}}\frac{\,_2F_1(1,2H_{t};2-H_{t+h}+H_t;\frac{t}{t+h})}{1+H_t-H_{t+h}}.
\end{eqnarray*}

\noindent First, plugging (\ref{eq:1}) and (\ref{eq:2}) into form (\ref{form3}) and then regrouping the terms yields
\begin{IEEEeqnarray}{lLl}
R_H(t+h,t)&=&\frac{c_{t+h,t}}{H_t+H_{t+h}}\Bigg[\btho t\left(\frac{t+h}{th}\right)^{H_{t+h}-H_t}f_t(h)h^{2H_{t+h}-1}\nonumber\\
&&\hspace{-2cm}+\mathbbm{1}_{\{H_t=H_{t+h}\}}\Big[\btho\btht(t+h)^{H_t+H_{t+h}}-\frac{\btho}{H_t+H_{t+h}-1}(t+h)h^{H_t+H_{t+h}-1}\Big]\nonumber\bigskip\\
&&+\: \left[\bhto\bhtt+\mathbbm{1}_{\{H_t\neq H_{t+h}\}}\btho\beta^*\right]t^{H_{t+h}+H_t}\Bigg].\nonumber
\end{IEEEeqnarray}
Noting that $\beta_{t,t}^2=\frac{1}{2H_t-1}$ and that\medskip\\ $\frac{c_{t+h,t}}{H_t+H_{t+h}}\btho\btht\mathbbm{1}_{\{H_t=H_{t+h}\}}= \frac{1}{2}\mathbbm{1}_{\{H_t=H_{t+h}\}}$, let
\begin{IEEEeqnarray*}{lLl}
g_{t,1}(h)&=&\frac{c_{t+h,t}}{H_{t+h}+H_t}\btho t(\frac{1}{h}+\frac{1}{t})^{H_{t+h}-H_t}f_t(h),\bigskip\\
g_{t,2}(h)&=&\frac{1}{2},\mbox{ and}\bigskip\\
g_{t,3}(h)&=&\frac{c_{t+h,t}}{H_{t+h}+H_t}[\bhto\bhtt+\mathbbm{1}_{\{H_t\neq H_{t+h}\}}\bhto\beta^*]t^{H_{t+h}+H_t},
\end{IEEEeqnarray*}
which yields (\ref{asy}). It remains to show that $g_{t,i}(h)\sim1$ for $i=1,2,3.$ This is clearly the case for $i=2$ and $i=3,$ since $a\leq H\leq b$ and all beta functions involved are bounded. For $i=3$ one approximates
$$
\sum\limits_{n=0}^\infty\frac{(1)_n}{(2.5)_n}\left(\frac{t}{t+h}\right)^n\leq\,_2F_1(1,2H_{t};2-H_{t+h}+H_t;\frac{t}{t+h})\leq\sum\limits_{n=0}^\infty\frac{(2)_n}{(1.5)_n}\left(\frac{t}{t+h}\right)^n.
$$
Both bounds are converging to 1 for $h\rightarrow\infty$ as argued in the proof of Lemma \ref{ah0}. Similarly,
\begin{IEEEeqnarray*}{lLl}
\,_2F_1(1,H_t-H_{t+h};2-2H_{t+h};\frac{t}{t+h})&&\bigskip\\
\hspace{.5cm}=1+\left(\frac{t}{t+h}\right)\left(\frac{H_t-H_{t+h}}{2-2H_{t+h}}\right)\sum\limits_{n=0}^\infty\frac{(H_t-H_{t+h}+1)_n}{(3-2H_{t+h})_n}\left(\frac{t}{t+h}\right)^n\rightarrow1&&
\end{IEEEeqnarray*}
for $h\rightarrow\infty.$ In the case $\{H_t=H_{t+h}\},$ $f_t(h)=1/(2H_t-1).$ Hence, $f_t(h)\sim1$ and $g_{t,1}(h)\sim1$, $g_{t,2}(h)\sim1$ and $g_{t,3}(h)\sim1$ for any fixed $t$. 
\end{proof}

\begin{remark} For constant $H$ the proof of Lemma \ref{ahi} yields the covariance function of \emph{fBM}.
\end{remark}

\subsection{Long-range dependence}

\emph{fBMvH} $X^H$ retains the long-range dependence property, which makes \emph{fBM} with $H>1/2$ an attractive model in situations where the dependence on past events decays slowly in time. \\

There are several ways to define long-range dependence mathematically. All of them refer to slowly decaying correlations in some way (see, e.g., \cite{Cohen}). For a second-order process $X$, let $Cor_X(t,s)$ denote the correlation of $X_t$ and $X_s$. In the next proposition, it will be shown that $Cor_{X^H}(t+h,t)$ decays not faster than $h^{-1}$ for $h\rightarrow\infty$ and the definition of long-range dependence used here will be the following:
\begin{definition}\label{long} A second-order process $X$ is said to have long-range dependence if 
$$\sum\limits_{k=0}^{\infty}|Cor_X(t+\delta k,t)|=\infty\mbox{ for all }t>0\mbox{ and }\delta>0.$$
\end{definition}

\begin{remark}
The correlation rather than the covariance is used in this definition. For non-stationary processes, the correlation and covariance of increments are not multiples of each other and while the sum of the covariances may diverge, the correlation can still be decreasing fast enough in time.
\end{remark}

\begin{proposition} Let $H:[0,\infty)\rightarrow[a,b]\subset(1/2,1).$ Then, $X^H$ has long-range dependence.
\end{proposition}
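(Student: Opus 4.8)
The plan is to reduce the claim to a lower estimate on the correlation $Cor_{X^H}(t+h,t)$ via the asymptotics of Lemma \ref{ahi}, and then to recognize the resulting series as a divergent $p$-series. First I would write the correlation explicitly. Since by Corollary \ref{cfBM} the variance of $X^H$ at time $u$ is $u^{2H_u}$,
$$Cor_{X^H}(t+h,t)=\frac{\RHth}{\sqrt{(t+h)^{2H_{t+h}}\,t^{2H_t}}}=\frac{\RHth}{(t+h)^{H_{t+h}}\,t^{H_t}}.$$

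The heart of the argument is a lower bound on $\RHth$ of order $h^{2H_{t+h}-1}$. Recall from Lemma \ref{ahi} that $\RHth=g_{t,1}(h)h^{2H_{t+h}-1}+\mathbbm{1}_{\{H_t=H_{t+h}\}}g_{t,2}(h)[(t+h)^{2H_{t+h}}-(t+h)h^{2H_{t+h}-1}]+g_{t,3}(h)$, where each $g_{t,i}(h)\sim1$, so each is bounded below by a positive constant for $h$ large. Since $g_{t,3}(h)>0$ and $H_{t+h}\ge a>1/2$ gives $2H_{t+h}-1\ge2a-1>0$, in the case $H_t\neq H_{t+h}$ one immediately gets $\RHth\ge g_{t,1}(h)h^{2H_{t+h}-1}\ge c\,h^{2H_{t+h}-1}$ for some $c>0$ and all large $h$. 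In the case $H_t=H_{t+h}$ I would additionally verify that the bracketed term is nonnegative for large $h$: factoring out $h^{2H_{t+h}}$ and Taylor-expanding in $t/h$ gives $(t+h)^{2H_{t+h}}-(t+h)h^{2H_{t+h}-1}=(2H_{t+h}-1)t\,h^{2H_{t+h}-1}+O(h^{2H_{t+h}-2})$, which is positive since $2H_{t+h}-1>0$ and $t>0$. Hence, dropping the nonnegative middle and last terms, the bound $\RHth\ge c\,h^{2H_{t+h}-1}$ holds in both cases (in particular $\RHth>0$ eventually).

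Next I would control the denominator. Because $t/h\to0$ and $H_{t+h}\le b$, we have $(t+h)^{H_{t+h}}=h^{H_{t+h}}(1+t/h)^{H_{t+h}}\le2\,h^{H_{t+h}}$ for $h$ sufficiently large. Combining this with the lower bound on $\RHth$ yields, for all large $h$,
$$Cor_{X^H}(t+h,t)\ge\frac{c\,h^{2H_{t+h}-1}}{2\,h^{H_{t+h}}\,t^{H_t}}=\frac{c}{2t^{H_t}}\,h^{H_{t+h}-1}\ge\frac{c}{2t^{H_t}}\,h^{a-1},$$
where the last step uses $H_{t+h}\ge a$ together with $h\ge1$. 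Since $a-1>-1/2>-1$, this confirms that the correlation decays no faster than $h^{-1}$, as announced before the statement.

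Finally, setting $h=\delta k$, for any $t>0$ and $\delta>0$ the tail of the series obeys $\sum_{k}|Cor_{X^H}(t+\delta k,t)|\ge\frac{c}{2t^{H_t}}\sum_{k\ge K}(\delta k)^{a-1}$, which diverges because $a-1>-1$. This establishes the long-range dependence property in the sense of Definition \ref{long}. I expect the main obstacle to be the positivity-and-order verification of $\RHth$, specifically ruling out cancellation in the bracket $(t+h)^{2H_{t+h}}-(t+h)h^{2H_{t+h}-1}$ in the case $H_t=H_{t+h}$; once the leading order $h^{2H_{t+h}-1}$ with a strictly positive coefficient is secured, the remaining normalization and series estimates are routine.
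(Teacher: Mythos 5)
Your proposal is correct and follows essentially the same route as the paper: both rest on Lemma \ref{ahi}, a Taylor expansion of the bracket $(t+h)^{2H_{t+h}}-(t+h)h^{2H_{t+h}-1}$ showing its leading term is $(2H_{t+h}-1)t\,h^{2H_{t+h}-1}>0$, normalization by the variances $t^{2H_t}$ and $(t+h)^{2H_{t+h}}$, and comparison with a divergent $p$-series. The only cosmetic difference is that you stop at a one-sided lower bound $Cor_{X^H}(t+h,t)\geq c\,h^{a-1}$, whereas the paper records the two-sided asymptotic $R_H(t+h,t)\sim(t+h)^{2H_{t+h}-1}$ and compares with $(t+\delta k)^{-1/2}$; both suffice for Definition \ref{long}.
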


\begin{proof} Taylor expansions yield\bigskip\\
$h^{2H_{t+h}-1}=(t+h)^{2H_{t+h}-1}+o(1)$ for $h\rightarrow\infty$ and\bigskip\\
$h^{2H_{t+h}}=(t+h)^{2H_{t+h}}-2H_{t+h}t(t+h)^{2H_{t+h}-1}+o(1)$ for $h\rightarrow\infty.$ \bigskip\\
By Lemma \ref{ahi}, it follows that
\begin{IEEEeqnarray*}{lLl}
R_H(t+h,t)&=&\big[g_{t,1}(h)+\mathbbm{1}_{\{H_t=H_{t+h}\}}g_{t,2}(h)t(2H_{t+h}-1)\big](t+h)^{2H_{t+h}-1}+g_{t,3}(h)+o(1)\bigskip\\
&\sim&(t+h)^{2H_{t+h}-1}\mbox{ for }h\rightarrow\infty.
\end{IEEEeqnarray*}
Hence $Cor_{X^H}(t+h,t)\sim(t+h)^{H_{t+h}-1}$ and it follows that 
$$\sum\limits_{k=0}^{\infty}|Cor_{X^H}(t+\delta k,t)|\geq c\sum\limits_{k=0}^{\infty}(t+\delta k)^{-1/2}=\infty\mbox{ for all }t>0\mbox{ and }\delta>0$$
for some constant $c>0.$ 
\end{proof}
\begin{remark} 
$Cor_{X^H}(t+h,t)\sim(t+h)^{H_{t+h}-1}$ implies that for each starting point $t$, the correlation structure evolves differently.
\end{remark}

\section{A Fokker-Planck equation for time-changed \emph{fBMvH}.}

This section establishes a Fokker-Plank equation (FPE) for the densities of a time-changed \emph{fBMvH} under the differentiability assumption on the parameter function $H$. The time-change process is the inverse of a stable subordinator, which yields a fractional derivative in the FPE. FPEs involving time-fractional derivatives are used as a powerful tool in the study and modeling of anomalous diffusion processes (see for example \cite{benson4}, \cite{janczura17}, \cite{metzler25}, \cite{zaslavsky37}).\\

A stable subordinator $W^\alpha$ with index $\alpha$, is a nonnegative and strictly increasing L\'{e}vy process starting at 0 and exhibiting the self-similarity property $W^{\alpha}_{ct}\sim c^{1/\alpha}W^\alpha_t$ for all $t>0$ and any $c>0$ in the sense of finite dimensional distributions \cite{applebaum}. The inverse of a stable subordinator $W^{\alpha}$ is defined by $E_t^{\alpha}=\inf\{s:\,W^\alpha_s>t\}$ for $t\geq0$. Since $W^\alpha$ is strictly increasing, $E^\alpha$ is non-decreasing and continuous and hence a suitable time-change process \cite{Meerschaert}.\\

The Caputo-Djrbashian fractional-order derivative $D^{\alpha}_t$ of order $\alpha\in(0,1)$ is given by
$$D^{\alpha}_tg(t) = \frac{1}{\Gamma(1-\alpha)}\int\limits_0^t\frac{g'(\tau)}{(t-\tau)^{\alpha}}\,d\tau.$$
By convention, $D^{1}_t=\frac{d}{dt}.$ The fractional integration operator of order $\alpha>0$ is defined via
$$J^{\alpha}_tg(t)= \frac{1}{\Gamma(\alpha)}\int\limits_0^tg(\tau)(t-\tau)^{\alpha-1}\,d\tau.$$
The relationship between the above three operators is given by $D^{\alpha}_t=J^{1-\alpha}_t\circ\frac{d}{dt}$ (for details, see \cite{Gorenflo}).\\

The following theorem is an adaptation of Proposition 1 in \cite{Hkru} to the specific setting of \emph{fBMvH}.

\begin{theorem} Let $X^H$ be a fBMvH with differentiable Hurst parameter function $H$. The transition probabilities $p(t,x)$ of $X^H$ satisfy
$$\partial_tp(t,x)=\left(H'_t\ln(t)+\frac{H_t}{t}\right)t^{2H_t}\partial_{x}^2\,p(t,x),\  t > 0,\ x\in\mathbb{R}.$$
\end{theorem}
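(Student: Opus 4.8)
The plan is to reduce the statement to elementary calculus once the one-dimensional marginal density of $X^H_t$ has been identified. Since $X^H$ is a centered Gaussian process starting at zero, the random variable $X^H_t$ is centered Gaussian, and by Corollary~\ref{cfBM} its variance is $v(t):=R_H(t,t)=t^{2H_t}$. Because the process starts at $X^H_0=0$, the transition density $p(t,x)$ is just the density of $X^H_t$, namely
\[
p(t,x)=\frac{1}{\sqrt{2\pi\,t^{2H_t}}}\exp\left(-\frac{x^2}{2\,t^{2H_t}}\right),\qquad t>0,\ x\in\mathbb{R}.
\]

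Next I would establish the general fact that a Gaussian density with a differentiable, strictly positive, time-dependent variance $v(t)$ solves a heat-type equation with diffusion coefficient $\tfrac12 v'(t)$. Writing $\ln p=-\tfrac12\ln(2\pi v)-x^2/(2v)$ and differentiating, one obtains $\partial_x^2 p=\bigl(x^2/v^2-1/v\bigr)p$ and $\partial_t p=\tfrac12 v'(t)\bigl(x^2/v^2-1/v\bigr)p$; comparing the two identities gives
\[
\partial_t p(t,x)=\tfrac12\,v'(t)\,\partial_x^2 p(t,x).
\]
This step requires only that $v$ be differentiable in $t$ and strictly positive, both of which hold here for $t>0$ under the hypothesis that $H$ is differentiable. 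Note that this identity holds as a PDE for the marginal density irrespective of the sign of $v'$.

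The final step is to compute $v'(t)$ for $v(t)=t^{2H_t}=\exp(2H_t\ln t)$. Passing through the exponential representation so as to capture the $t$-dependence inside the exponent, and using the differentiability of $H$, the chain rule yields
\[
v'(t)=t^{2H_t}\,\frac{d}{dt}\bigl(2H_t\ln t\bigr)=t^{2H_t}\left(2H'_t\ln t+\frac{2H_t}{t}\right)=2\left(H'_t\ln t+\frac{H_t}{t}\right)t^{2H_t},
\]
so that $\tfrac12 v'(t)=\bigl(H'_t\ln t+H_t/t\bigr)t^{2H_t}$, which is exactly the coefficient appearing in the claimed equation. Substituting this into the heat-type identity of the previous paragraph gives the theorem.

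There is no deep obstacle here: the only nontrivial inputs are the identification of the variance via Corollary~\ref{cfBM} and the differentiation of $t\mapsto t^{2H_t}$. The latter is precisely where differentiability of $H$, rather than mere continuity, is used, and it explains the appearance of the $H'_t\ln t$ term; a constant Hurst parameter would recover the classical coefficient $H\,t^{2H-1}$. The one point warranting care is the verification of the general Gaussian forward identity $\partial_t p=\tfrac12 v'\partial_x^2 p$, after which the argument is a routine computation.
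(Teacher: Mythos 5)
Your proposal is correct and follows essentially the same route as the paper: the paper does not write out a proof but simply adapts Proposition~1 of \cite{Hkru}, whose content is exactly the Gaussian identity $\partial_t p(t,x)=\tfrac12 v'(t)\,\partial_x^2 p(t,x)$ for a centered Gaussian marginal with differentiable variance $v(t)$, applied with $v(t)=t^{2H_t}$ (identified via the variance formula $R_H(t,t)=t^{2H_t}$) and $v'(t)=2\bigl(H'_t\ln t+H_t/t\bigr)t^{2H_t}$. The only difference is that you prove the cited identity directly rather than invoking it, which is precisely what a self-contained version of the paper's argument requires.
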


If $H$ is differentiable, then $Var(X_t^H)=t^{2H_t}$ is differentiable as well. $t^{2H_t}$ is Laplace transformable, since $t^{2H_t}\leq t\vee t^2.$ Thus, the conditions of Theorem 3 in \cite{Hkru} are satisfied and the following theorem is obtained.\\

Let $\tilde{g}$ denote the ($t\rightarrow s$)--Laplace transform of a function $g=g(t)$ and let $\mathcal{L}^{-1}_{s\rightarrow t}$ denote the inverse Laplace transform.

\begin{theorem} Let $X^H$ be a fBMvH with $H$ differentiable on $(0,\infty)$. Further, let $E^{\alpha}$ be the inverse of a stable subordinator $W^{\alpha}$ of index $\alpha\in(0,1)$, independent of $X^H$. Then, the transition probabilities $q(t,x)$ of the time-changed process $(X^H_{E^{\alpha}_t})_{t\geq0}$ satisfy the equivalent PDEs
$$D^{\alpha}_tq(t,x) =J^{1-\alpha}_t\Lambda^{\alpha}_{X^H,t}\partial^2_{x}q(t,x),\ t>0,\ x\in\mathbb{R}$$
and
$$\partial_tq(t,x)=\Lambda^{\alpha}_{X^H,t}\partial^2_{x}q(t,x),\ t>0,\ x\in\mathbb{R},$$
where $\Lambda^{\alpha}_{X^H,t}$ is the operator acting on $t$ given by
$$
\Lambda^{\alpha}_{X^H,t}g(t)=\frac{\alpha}{2}\mathcal{L}^{-1}_{s\rightarrow t}\left[\frac{1}{2\pi i}\int_{\mathcal{C}}(s^{\alpha}-z^{\alpha})\tilde{R}_{H}(s^{\alpha}-z^{\alpha})\tilde{g}(z)\,dz\right](t),
$$
with initial condition $q(0,x) = \delta_0(x)$.  
\end{theorem}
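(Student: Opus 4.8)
The plan is to reduce the statement to the general subordination result, Theorem 3 in \cite{Hkru}, by first recording the law and the governing equation of the untime-changed process and then carrying out the Laplace-transform bookkeeping that produces the operator $\Lambda^{\alpha}_{X^H,t}$. By Corollary \ref{cfBM} the marginal $X^H_t$ is centered Gaussian with variance $\RH(t,t)=t^{2H_t}$, so its transition density $p(t,x)$ is the heat kernel evaluated at variance $t^{2H_t}$ and, by the preceding theorem, solves $\partial_t p=a(t)\,\partial_x^2 p$ with $a(t)=\tfrac12\frac{d}{dt}\RH(t,t)=(H'_t\ln t+H_t/t)\,t^{2H_t}$ and $p(0,x)=\delta_0(x)$. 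The differentiability of $H$ makes $a$ well-defined and the bound $t^{2H_t}\le t\vee t^2$ makes the variance (hence $a$) Laplace transformable; these are exactly the hypotheses recorded in the discussion preceding the statement, so the remaining work is to extract the conclusion of \cite{Hkru}. Throughout, $\tilde R_H(s)=\int_0^\infty e^{-st}t^{2H_t}\,dt$ denotes the Laplace transform of the variance function.

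First I would use independence of $X^H$ and $E^\alpha$ to write the time-changed density as the subordination integral $q(t,x)=\int_0^\infty p(u,x)\,f_t(u)\,du$, where $f_t$ is the density of $E^\alpha_t$. Taking the $t\to s$ Laplace transform and invoking the classical identity $\int_0^\infty e^{-st}f_t(u)\,dt=s^{\alpha-1}e^{-us^\alpha}$ for the inverse stable subordinator collapses this to $\tilde q(s,x)=s^{\alpha-1}\tilde p(s^\alpha,x)$, where $\tilde p(\lambda,x)$ is the Laplace transform of $p$ in its time variable. Thus the fractional order enters solely through the substitution $\lambda=s^\alpha$, and the identity extends to complex arguments by analytic continuation.

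The core step is to transport the equation for $p$ through this substitution. Laplace-transforming $\partial_u p=a(u)\partial_x^2 p$ in $u$ gives $\lambda\tilde p(\lambda,x)-\delta_0(x)=\mathcal L_u[a\,\partial_x^2 p](\lambda)$; because $a\cdot\partial_x^2p$ is a \emph{product} in time, its transform is the complex (Bromwich) convolution $\frac{1}{2\pi i}\int_\mathcal{C}\tilde a(\lambda-w)\,\partial_x^2\tilde p(w,x)\,dw$, with $\tilde a(\mu)=\tfrac12\mu\tilde R_H(\mu)$ since $\RH(0,0)=0$. Setting $\lambda=s^\alpha$, substituting $w=z^\alpha$ (so $dw=\alpha z^{\alpha-1}dz$) and using $\tilde p(z^\alpha,x)=z^{1-\alpha}\tilde q(z,x)$, the factors $z^{1-\alpha}z^{\alpha-1}$ cancel and the right-hand side becomes precisely $\frac{\alpha}{2}\frac{1}{2\pi i}\int_\mathcal{C}(s^\alpha-z^\alpha)\tilde R_H(s^\alpha-z^\alpha)\,\partial_x^2\tilde q(z,x)\,dz=\mathcal L_t[\Lambda^\alpha_{X^H,t}\partial_x^2 q](s)$, while the left-hand side equals $s\tilde q(s,x)-\delta_0(x)=\mathcal L_t[\partial_t q](s)$ because $E^\alpha_0=0$ and $X^H_0=0$ force $q(0,x)=\delta_0(x)$. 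Inverting the Laplace transform yields the first-order equation $\partial_t q=\Lambda^\alpha_{X^H,t}\partial_x^2 q$.

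Finally, the equivalence of the two displayed PDEs is immediate from the operator identity $D^\alpha_t=J^{1-\alpha}_t\circ\frac{d}{dt}$ recorded above: applying $J^{1-\alpha}_t$ to $\partial_t q=\Lambda^\alpha_{X^H,t}\partial_x^2 q$ produces the Caputo form $D^\alpha_t q=J^{1-\alpha}_t\Lambda^\alpha_{X^H,t}\partial_x^2 q$, and injectivity of $J^{1-\alpha}_t$ on the relevant function class gives the converse. The hard part will be the third step: justifying the Bromwich-convolution representation of the transform of the product $a\,\partial_x^2p$ together with the contour substitution $w=z^\alpha$, i.e.\ the convergence, analyticity, and admissible deformation of $\mathcal{C}$ needed for every manipulation above. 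This is exactly the analysis carried out abstractly in Theorem 3 of \cite{Hkru}; once the differentiability and Laplace-transformability of $t^{2H_t}$ are in hand, that theorem applies verbatim and delivers the stated operator $\Lambda^\alpha_{X^H,t}$.
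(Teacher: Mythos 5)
Your proposal is correct and takes essentially the same route as the paper: the paper's proof consists exactly of checking that differentiability of $H$ makes $Var(X^H_t)=t^{2H_t}$ differentiable, that $t^{2H_t}\leq t\vee t^2$ makes it Laplace transformable, and then invoking Theorem 3 of \cite{Hkru}. Your additional unpacking of the subordination identity $q(t,x)=\int_0^\infty f_{E^\alpha_t}(\tau)p(\tau,x)\,d\tau$ and the Bromwich-convolution bookkeeping reproduces the internals of the cited theorem (which the paper only alludes to in the remark that follows), so it adds detail but not a different method.
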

For further details on the operator $\Lambda^{\alpha}_{X^H,t}$ see \cite{Hkru}.

\begin{remark} The proof uses the fact that given the independence of $X^H$ and $E^{\alpha}$, the relationship between the transition probabilities $p(\tau,x)$ for $\tau>0,\ x\in\mathbb{R}$ of the process and the transition probabilities of the time-changed version is given by $q(t,x)=\int\limits_0^{\infty}f_{E^{\alpha}_t}(\tau)p(\tau,x)\,d\tau$, $t>0$, with $f_{E^{\alpha}_t}$ denoting the density function of $E^{\alpha}_t$. The time change yields the occurrence of the fractional-order derivative. 
\end{remark}
\begin{remark} The choice of the time-change process can be extended to the inverse of an arbitrary mixture of independent stable subordinators (see \cite{Hkru}).
\end{remark}

\section{Modeling}

The plots in this section were generated using the Cholesky decomposition of the covariance matrix. For that, let $t_1,...,t_n\in[0,T]$ for some $T>0.$ By Theorem \ref{pd}, the covariance matrix $\Sigma$ of the vector $(X^H_{t_1},...,X^H_{t_n})$ is positive definite and can hence be decomposed as 
$$\Sigma=LL^\mathrm{T},$$
where $L$ is a lower triangular matrix.
Let $Y=(Y_1,...,Y_n)$ be a standard normal vector with the identity matrix as the covariance matrix. Then, $LY$ has $\Sigma$ as covariance matrix, since
$$Cov(LY)=E[LY(LY)^\mathrm{T}]=LE[YY^\mathrm{T}]L^\mathrm{T}=LL^\mathrm{T}=\Sigma.$$
Hence, $LY$ is a sample path of a \emph{fBMvH} with parameter function $H$ and at the times $t_1,...,t_n.$\\

Using the Cholesky decomposition in order to obtain a sample path of a Gaussian process generates an exact sample path of the process in question. The disadvantage of such an algorithm is the long computational time needed to compute the $n\times n$ covariance matrix and the Cholesky factorisation matrix $L$. For applications, an algorithm based on the convergence of \emph{fBM}s with indices $H_n\rightarrow H_ t$ such as in \cite{PeltVeh} would be preferable in practice, even though it would not generate an exact sample path.\\
\vspace{-.1cm}
\noindent\underline{Plots.}
\vspace{-.5cm}
\begin{figure}[H]
\centering
\includegraphics[width=0.8\textwidth]{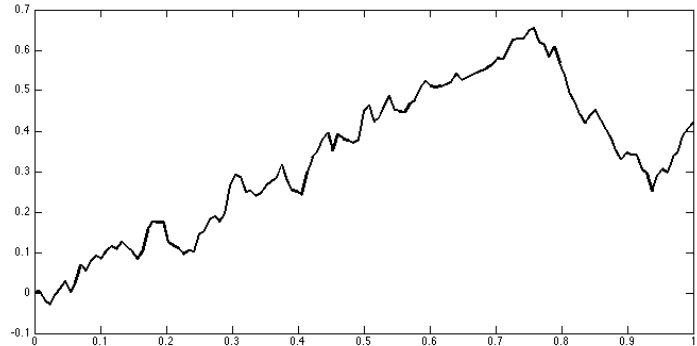}
\noindent\caption{This is the usual \emph{fBM} with constant Hurst parameter $H=.75$.}
\end{figure}
\vspace{-.6cm}
The following are exact sample paths of \emph{fBMvH} with various parameter functions $H(\cdot).$
\begin{figure}[H]
\centering
\includegraphics[width=0.8\textwidth]{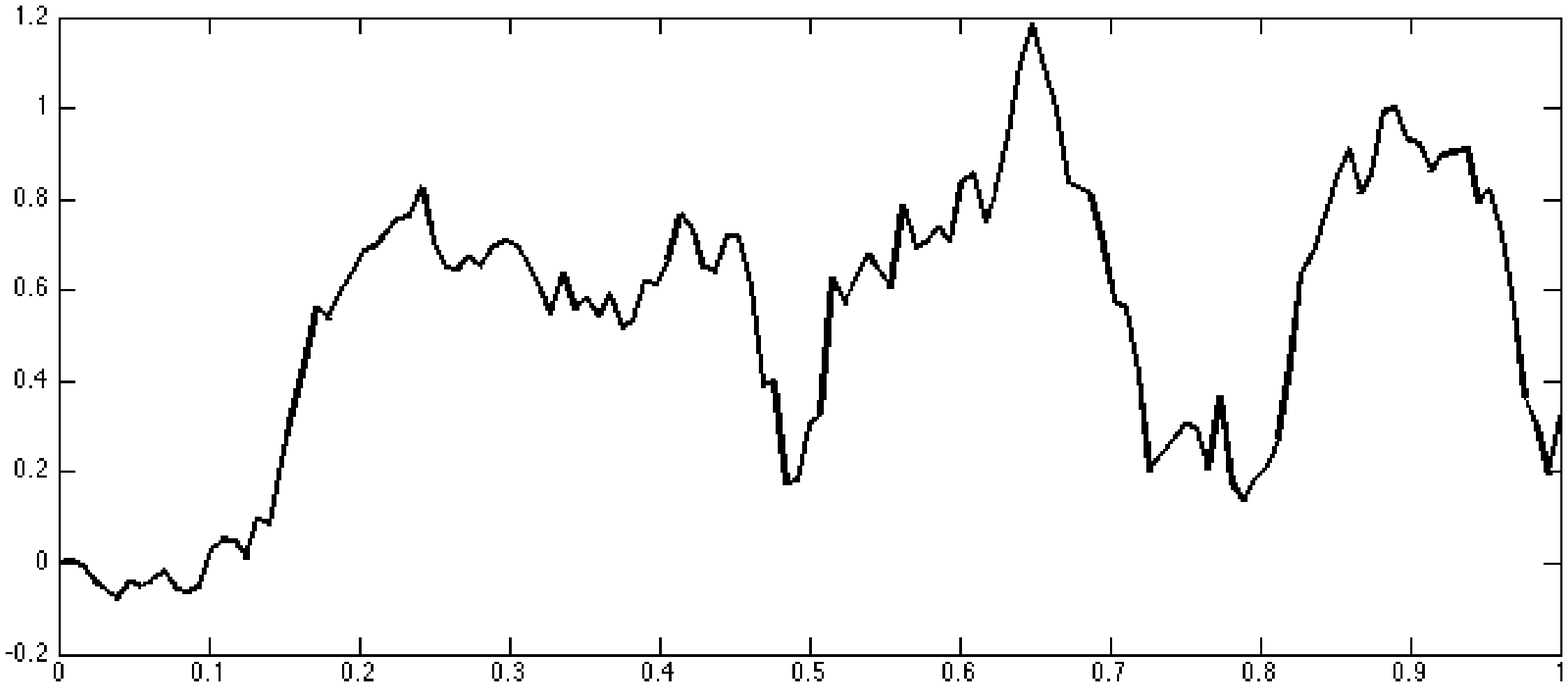}
\caption{$H(t)=.51+(t-.5)^2$ for $t<.5$ and $H(t)=.51$ for $t\geq.5.$ The Hurst parameter approaches .51 as $t\rightarrow 1/2$.}
\end{figure}
\vspace{-.7cm}
\begin{figure}[H]
\centering
\includegraphics[width=0.8\textwidth]{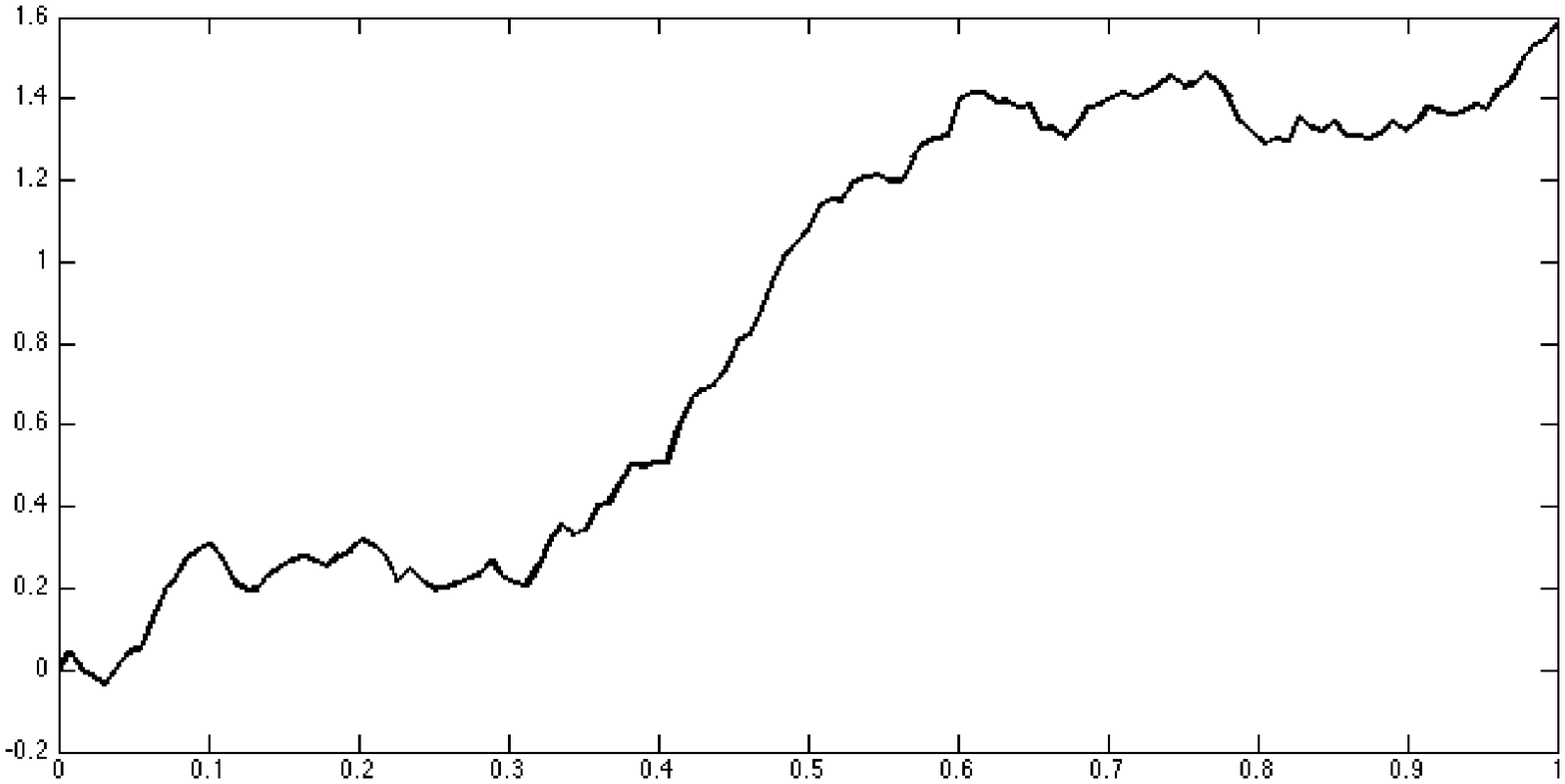}
\noindent\caption{$H(t)=\frac{2}{3}+\frac{\sin(t)}{12}$. The Hurst parameter oscillates between 7/12 and 9/12. }
\end{figure}
\begin{figure}[H]
\centering
\includegraphics[width=0.8\textwidth]{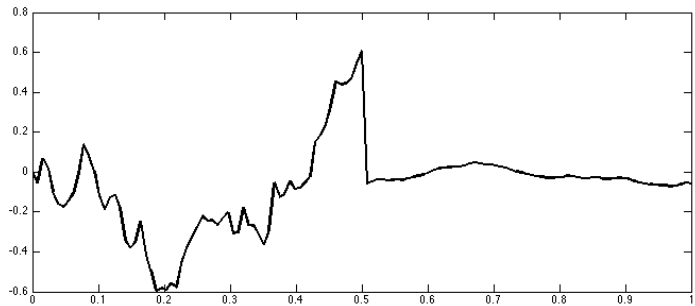}
\caption{$H(t)=.55$ for $t\leq.5$ and $H(t)=.95$ for $t>.5.$The Hurst parameter jumps at .5.}
\end{figure}
\vspace{-.7cm}
\begin{figure}[H]
\centering
\includegraphics[width=0.8\textwidth]{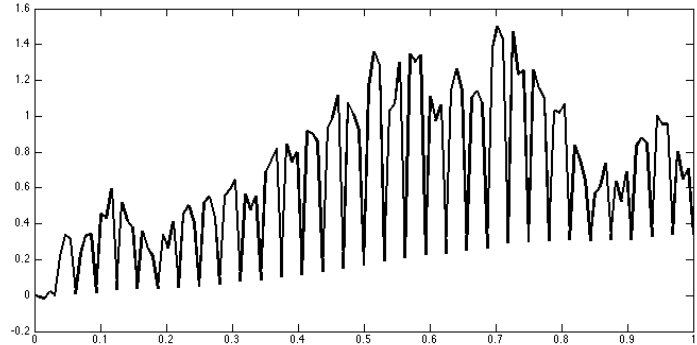}
\caption{ The Hurst parameter jumps periodically: $H(t)=.99$ if $128t\ mod\ 4 =0$ and $H(t)=.51$ else.}
\end{figure}

\noindent\textbf{Acknowledgements }The author wishes to thank Dr. Marjorie Hahn for her help and advice, Dr. Kei Kobayashi for productive comments as well as Dr. Anna P\'osfei for fruitful discussions. The author is also indebted to an anonymous referee for calling my attention to papers \cite{multifrac} and \cite{bouf} as well as suggestions which increased the clarity of the paper.

\bibliographystyle{spmpsci}
\bibliography{bib1}

\appendix
\section{Appendix}

\begin{lemma}\label{AppB1} Let $\alpha<2, \gamma>1/2,\delta>1/2$ and $I=\int_0^{a}u^{1-\alpha}(b-u)^{\gamma-\frac{3}{2}}(a-u)^{\delta-\frac{3}{2}}\,du$, then\medskip\\
\noindent i) 
	$$
	I=(b-a)^{\gamma+\delta-2}\int_{b/a}^{\infty}(x-1)^{\alpha-\delta-\gamma}x^{\gamma-\frac{3}{2}}(ax-b)^{1-\alpha}\,dx.
	$$

\noindent ii) If additionally $\alpha=\gamma+\delta$, then
	$$
	I=(b-a)^{\alpha-2}\int_{b/a}^{\infty}x^{\gamma-\frac{3}{2}}(ax-b)^{1-\alpha}\,dx=(b-a)^{\alpha-2}b^{\frac{1}{2}-\delta}a^{\frac{1}{2}-\gamma}\beta(\delta-1/2,2-\alpha).
	$$
\noindent iii) If $\alpha,\gamma\in(-1,1)$ arbitrary, $0<a<b$ and $J=\int_a^by^{-\alpha}\int_0^az^{\alpha}(y-z)^{-\gamma}\,dz\,dy,$ then
	$$
	J=\frac{a^{2-\gamma}}{2-\gamma}\int_{a/b}^1v^{\alpha+\gamma-2}(1-v)^{-\gamma}\,dv+\frac{b^{2-\gamma}}{2-\gamma}\int_0^{a/b}v^{\alpha}(1-v)^{-\gamma}\,dv-\frac{a^{2-\gamma}}{2-\gamma}\beta(\alpha+1,1-\gamma).
	$$

\end{lemma}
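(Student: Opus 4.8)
The plan is to treat the three parts in order, each reducing to a well-chosen change of variables that turns the given integral into either the integral on the right-hand side or a standard Beta integral. For part (i) I would substitute $x=(b-u)/(a-u)$, which maps $u\in(0,a)$ monotonically onto $x\in(b/a,\infty)$ (the relevant case being $0<a<b$). A short computation gives $u=(ax-b)/(x-1)$, $a-u=(b-a)/(x-1)$, $b-u=x(b-a)/(x-1)$ and $du=(b-a)/(x-1)^2\,dx$. Substituting these into $I$ and collecting the powers of $(b-a)$, $(x-1)$, $x$ and $(ax-b)$ separately, the $(b-a)$-exponents sum to $\gamma+\delta-2$ while the $(x-1)$-exponents sum to $\alpha-\gamma-\delta$, which is exactly the claimed identity. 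The only care needed is the exponent bookkeeping together with integrability at the endpoints, which holds by $\alpha<2$ (at $u=0$) and $\delta>1/2$ (at $u=a$).

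For part (ii), imposing $\alpha=\gamma+\delta$ makes the factor $(x-1)^{\alpha-\gamma-\delta}$ equal to $1$, so part (i) gives at once the intermediate form $I=(b-a)^{\alpha-2}\int_{b/a}^{\infty}x^{\gamma-3/2}(ax-b)^{1-\alpha}\,dx$. To evaluate the remaining integral I would substitute $w=b/(ax)$, mapping $(b/a,\infty)$ onto $(0,1)$, under which $ax-b=b(1-w)/w$. Collecting powers and using $\alpha=\gamma+\delta$ to simplify the exponents (so that $b^{\gamma-\alpha+1/2}=b^{1/2-\delta}$ and $w^{\alpha-\gamma-3/2}=w^{\delta-3/2}$) turns the integral into $b^{1/2-\delta}a^{1/2-\gamma}\int_0^1 w^{\delta-3/2}(1-w)^{1-\alpha}\,dw$, which is precisely $b^{1/2-\delta}a^{1/2-\gamma}\beta(\delta-1/2,2-\alpha)$; here $\delta>1/2$ and $\alpha<2$ guarantee convergence of the Beta integral.

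For part (iii), I would first dispatch the inner integral by substituting $z=yv$, giving $\int_0^a z^{\alpha}(y-z)^{-\gamma}\,dz=y^{\alpha-\gamma+1}\int_0^{a/y}v^{\alpha}(1-v)^{-\gamma}\,dv$, so that $J=\int_a^b y^{1-\gamma}\int_0^{a/y}v^{\alpha}(1-v)^{-\gamma}\,dv\,dy$. The cleanest continuation is to interchange the order of integration over the region $\{(y,v):a\le y\le b,\ 0\le v\le a/y\}$. For fixed $v$ the constraint $v\le a/y$ becomes $y\le a/v$, so the inner $y$-range is $[a,\min(b,a/v)]$, with the two regimes separated at $v=a/b$; this split produces exactly the integrals $\int_0^{a/b}$ and $\int_{a/b}^1$ in the statement. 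Performing the elementary integrals $\int_a^b y^{1-\gamma}\,dy$ and $\int_a^{a/v}y^{1-\gamma}\,dy$ and then recombining the two $v^{\alpha}(1-v)^{-\gamma}$ pieces into a single integral over $(0,1)$ yields the $-\tfrac{a^{2-\gamma}}{2-\gamma}\beta(\alpha+1,1-\gamma)$ term, while the $(a/v)^{2-\gamma}$ contribution supplies the $v^{\alpha+\gamma-2}$ factor of the first term. (An integration-by-parts alternative with $U=\int_0^{a/y}v^{\alpha}(1-v)^{-\gamma}\,dv$ and $dV=y^{1-\gamma}\,dy$ gives the same decomposition, with the boundary terms at $y=a$ and $y=b$ playing the role of the split.)

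I expect the main obstacle to be the region decomposition in part (iii): getting the $y=a/v$ versus $y=b$ boundary right at $v=a/b$ is what makes the three terms assemble correctly, and a single slip in the exponent there would spoil the cancellation that yields the Beta term. The other parts are pure change-of-variable bookkeeping; throughout, the ranges $\alpha,\gamma\in(-1,1)$ (and $\alpha<2$, $\gamma,\delta>1/2$ in parts (i)--(ii)) keep every intermediate one-dimensional integral convergent and justify the substitutions.
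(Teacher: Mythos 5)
Your proposal is correct and follows essentially the same route as the paper: part (i) via the substitution $x=(b-u)/(a-u)$ (the paper writes it as $x=\frac{u-b}{u-a}$, which is the same map), part (ii) via $w=b/(ax)$ reducing to a Beta integral, and part (iii) via $z=yv$ followed by interchanging the order of integration with the split at $v=a/b$ and recombining the two pieces into the $\beta(\alpha+1,1-\gamma)$ term. All the exponent bookkeeping and convergence remarks check out, so nothing further is needed.
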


\begin{proof} Part i) is obtained via substituting $x=\frac{u-b}{u-a}$. Part ii) follows by substituting $y=\frac{b}{ax}.$ For iii) $z=yv$ is substituted  and it follows that
\begin{IEEEeqnarray*}{lLl}
J&=&\int_a^by^{1-\gamma}\int_0^{a/y}v^{\alpha}(1-v)^{-\gamma}\,dv\,dy\bigskip\\
&=&\int_{a/b}^1v^{\alpha}(1-v)^{-\gamma}\int_a^{a/v}y^{1-\gamma}\,dy\,dv+\int_0^{a/b}v^{\alpha}(1-v)^{-\gamma}\int_a^{b}y^{1-\gamma}\,dy\,dv\bigskip\\
&=&\frac{a^{2-\gamma}}{2-\gamma}\int_{a/b}^1v^{\alpha+\gamma-2}(1-v)^{-\gamma}\,dv+\frac{b^{2-\gamma}}{2-\gamma}\int_0^{a/b}v^{\alpha}(1-v)^{-\gamma}\,dv\bigskip\\
&&\ \ -\frac{a^{2-\gamma}}{2-\gamma}\beta(\alpha+1,1-\gamma).
\end{IEEEeqnarray*}
\end{proof}

\section{Appendix}

Formulas $(i)-(iv)$ below can be found in \textit{The Handbook of mathematical functions}, p. 559 by Abramowitz and Stegun \cite{Abram}. Formula $(v)$ is from the \textit{Integrals and Series} handbook by Prudnikov at. al. \cite{Prud}.

\begin{lemma}\label{Abramo}
Let $a,b,c$ be real numbers and $z\in\mathbb{C}$.\medskip\\
i) For $|arg(z)|, |arg(1-z)|<\pi$ and when all terms are defined, 
\begin{IEEEeqnarray*}{lLl}
\,_2F_1(a,b;c;z)&=&\frac{\Gamma(c)\Gamma(c-a-b)}{\Gamma(c-a)\Gamma(c-b)}z^{-a}\,_2F_1(a,a-c+1;a+b-c+1;1-\frac{1}{z})\bigskip\\
&&\hspace{-.1cm}+\frac{\Gamma(c)\Gamma(a+b-c)}{\Gamma(a)\Gamma(b)}(1-z)^{c-a-b}z^{a-c}\,_2F_1(c-a,1-a;c-a-b+1;1-\frac{1}{z}).
\end{IEEEeqnarray*}
ii) If $(1-z)^{-a}$ is defined, 
$$_2F_1(a,b;c;z)=(1-z)^{-a}\,_2F_1(a,c-b;c;\frac{z}{z-1}).$$
iii) For $|arg(1-z)|<\pi$ and when all terms are defined, 
\begin{IEEEeqnarray*}{lLl}\label{abramo}
\,_2F_1(a,b;c;z)&=&\frac{\Gamma(c)\Gamma(c-a-b)}{\Gamma(c-a)\Gamma(c-b)}\,_2F_1(a,b;a+b-c+1;1-z)\bigskip\\
&&\ \ \ \ +\frac{\Gamma(c)\Gamma(a+b-c)}{\Gamma(a)\Gamma(b)}(1-z)^{c-a-b}\,_2F_1(c-a,c-b;c-a-b+1;1-z).
\end{IEEEeqnarray*}
iv) If $(1-z)^{c-a-b}$ is defined, 
$$_2F_1(a,b;c;z)=(1-z)^{c-a-b}\,_2F_1(c-a,c-b;c;z).$$
v) 
\begin{IEEEeqnarray*}{lLl}
\int_a^b(x-a)^{\alpha-1}(b-x)^{\delta-1}(cx+d)^{\gamma}\,dx&&\bigskip\\
\ \ \ \ \ \ \ \ \ \ \ \ \ \ \ =\beta(\alpha,\delta)(b-a)^{\alpha+\delta-1}(ac+d)^{\gamma}\,_2F_1(\alpha,-\gamma;\alpha+\delta;\frac{c(a-b)}{ac+d})&&
\end{IEEEeqnarray*}
if $Re(\alpha)>0,\ Re(\delta)>0$ and $|arg((d+cb)/(d+ca))|<\pi.$\medskip\\
vi) Under the assumptions of $(ii), (iii)$ and $(iv)$
\begin{IEEEeqnarray*}{lLl}
\,_2F_1(a,b;c;z)&=&\mathbbm{1}_{\{a=0\vee b=0\}}\bigskip\\
&&\hspace{-1.8cm}+\mathbbm{1}_{\{a\neq0\wedge b\neq0\}}(1-z)^{-a}\Bigg[\frac{\Gamma(c)\Gamma(b-a)}{\Gamma(c-a)\Gamma(b)}\,_2F_1(a,c-b;a-b+1;\frac{1}{1-z})\bigskip\\
&&\hspace{-1.4cm}+\frac{\Gamma(c)\Gamma(a-b)}{\Gamma(a)\Gamma(c-b)}\left(\frac{z}{z-1}\right)^{1-c}\left(\frac{1}{1-z}\right)^{b-a}\,_2F_1(b-c+1,1-a;b-a+1;\frac{1}{1-z})\Bigg].
\end{IEEEeqnarray*}
\end{lemma}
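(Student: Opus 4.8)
The plan is to treat parts (i)--(v) as known input: (i)--(iv) are the standard linear transformation formulas for $\,_2F_1$ recorded on p.~559 of \cite{Abram}, and (v) is the Euler-type integral evaluation cited from \cite{Prud}, so none of these requires an independent derivation. The genuine content is part (vi), which the statement itself flags as a consequence of (ii), (iii) and (iv); thus the entire proof reduces to chaining three known transformations with careful parameter bookkeeping. I would first dispose of the degenerate case: if $a=0$ or $b=0$, then $(a)_n$ or $(b)_n$ vanishes for every $n\geq1$ in the defining series \eqref{2F1}, so $\,_2F_1(a,b;c;z)=1$, matching the first indicator term. Hence I may assume $a\neq0$ and $b\neq0$.

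Next I would apply Pfaff's transformation (ii) to move the argument to $w=\frac{z}{z-1}$, giving
$$\,_2F_1(a,b;c;z)=(1-z)^{-a}\,_2F_1\!\left(a,c-b;c;w\right).$$
The key algebraic observation is that $1-w=\frac{1}{1-z}$, so that applying the connection formula (iii) to $\,_2F_1(a,c-b;c;w)$ around $w=1$ produces exactly the argument $\frac{1}{1-z}$. Substituting $(a',b',c')=(a,c-b,c)$ into (iii) and simplifying the parameters (so that $c'-a'-b'=b-a$, $c'-a'=c-a$, $c'-b'=b$, and $a'+b'-c'+1=a-b+1$) yields
$$\,_2F_1(a,c-b;c;w)=\frac{\Gamma(c)\Gamma(b-a)}{\Gamma(c-a)\Gamma(b)}\,_2F_1\!\left(a,c-b;a-b+1;\tfrac{1}{1-z}\right)+\frac{\Gamma(c)\Gamma(a-b)}{\Gamma(a)\Gamma(c-b)}\left(\tfrac{1}{1-z}\right)^{b-a}\,_2F_1\!\left(c-a,b;b-a+1;\tfrac{1}{1-z}\right).$$
Multiplying through by $(1-z)^{-a}$ reproduces the first bracketed term of (vi) verbatim.

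Finally I would reconcile the second term. My second summand carries $\,_2F_1\!\left(c-a,b;b-a+1;\tfrac{1}{1-z}\right)$, whereas (vi) displays $\,_2F_1\!\left(b-c+1,1-a;b-a+1;\tfrac{1}{1-z}\right)$ together with an extra factor $\left(\frac{z}{z-1}\right)^{1-c}$. These are identified by Euler's transformation (iv): with $(A,B,C)=(c-a,b,b-a+1)$ one computes $C-A=b-c+1$, $C-B=1-a$, and $C-A-B=1-c$, and since $1-\tfrac{1}{1-z}=\frac{z}{z-1}$ the prefactor $(1-\zeta)^{C-A-B}$ becomes exactly $\left(\frac{z}{z-1}\right)^{1-c}$. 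Substituting this identity converts my expression into the stated form, completing (vi).

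The main obstacle is not any single hard estimate but the simultaneous consistency of the domain and branch hypotheses. I would need to verify that the restrictions ``$|\arg(z)|,|\arg(1-z)|<\pi$'' and the well-definedness of each power and each Gamma quotient carried by (ii)--(iv) remain compatible under the successive substitutions $z\mapsto w=\frac{z}{z-1}$ and $\zeta=\frac{1}{1-z}$, and that the non-integrality conditions on $b-a$ and $c$ required by the Gamma factors hold; these are precisely the conditions collected in the phrase ``under the assumptions of (ii), (iii) and (iv).'' Confirming that the branch of $\left(\frac{z}{z-1}\right)^{1-c}$ produced by (iv) agrees with the branch implicitly fixed by the earlier Pfaff step is the one genuinely delicate point.
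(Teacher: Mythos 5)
Your proposal is correct and follows exactly the paper's own route: the paper likewise takes (i)--(v) as cited handbook facts and proves (vi) by applying (ii), then (iii), then (iv) to the second resulting term, with the indicator functions covering the degenerate case $a=0$ or $b=0$. Your version merely makes explicit the parameter bookkeeping ($c'-a'-b'=b-a$, $C-A-B=1-c$, $1-\tfrac{z}{z-1}=\tfrac{1}{1-z}$, etc.) that the paper leaves to the reader.
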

\begin{proof} Part $(vi)$: The equality is obtained by consecutively applying parts $(ii)$ and $(iii)$ of the Lemma to $\,_2F_1(a,b;c;z)$ and then applying part $(iv)$ to the second term of what was obtained in the first two steps. 
The indicator functions make up for the case  that $a=0$ or $b=0$, i.e., when $(iii)$ cannot be applied.
\end{proof}

\end{document}